\NeedsTeXFormat{LaTeX2e}
\documentclass[10pt,onesite,draft]{article}
\pagestyle{myheadings}
\newfam\cyrfam


\usepackage{amssymb,latexsym,amscd,amsmath,amsfonts, amsthm,enumerate}

%

\textheight 20truecm \textwidth 15.0truecm \voffset = 0.0cm
\hoffset= -0.750 cm

\newtheorem{theorem}{Theorem}[section]

\newtheorem{proposition}[theorem]{Proposition}
\newtheorem{corollary}[theorem]{Corollary}
\newtheorem{example}[theorem]{Example}
\newtheoremstyle{definition}
  {6pt}
  {6pt}
  {}
  {}
  {\bfseries}
  {.}
  {.5em}
  {}%

\theoremstyle{definition}
\newtheorem{definition}[theorem]{Definition}

\newtheoremstyle{remark}
  {6pt}
  {6pt}
  {}
  {}
  {\bfseries}
  {.}
  {.5em}
  {}%

\theoremstyle{remark}
\newtheorem{remark}[theorem]{Remark}

\makeatletter
\renewcommand\@makefntext[1]{%
\setlength\parindent{1em}%
\noindent \makebox[1.8em][r]{}{#1}} \makeatother

\begin{document}
\parskip 4pt
\large \setlength{\baselineskip}{15 truept}
\setlength{\oddsidemargin} {0.5in} \overfullrule=0mm
\def\bfh{\vhtimeb}
\date{}
\title{\bf \large MIXED  MULTIPLICITIES OF MAXIMAL DEGREES\\
(J. Korean Math. Soc. 55 (2018), No. 3, 605-622)}
\def\b{\vntime}
\author{
 Truong Thi Hong Thanh  and  Duong Quoc Viet   \\
\small Department of Mathematics, Hanoi National University of Education\\
\small 136 Xuan Thuy Street, Hanoi, Vietnam\\
\small Emails: thanhtth@hnue.edu.vn \;and\; vduong99@gmail.com.
\\}
 \date{}
\maketitle \centerline{
\parbox[c]{11.7cm}{
\small{\bf ABSTRACT:} The original  mixed multiplicity theory
considered the class of mixed multiplicities concerning the terms
of highest total degree in the Hilbert polynomial. This paper
defines a broader class of  mixed multiplicities that concern the
maximal terms in this polynomial, and gives many results, which
are not only general but also more natural than many results in
the original mixed multiplicity theory.
 }}
\section{Introduction}
 Let $(A,\frak{m})$ be an Artinian local ring with  maximal ideal $\frak{m}$ and infinite
 residue field $ A/\frak{m}.$ Let $S=\bigoplus_{\mathrm{\bf n}\in \mathbb{N}^d}S_{\mathrm{\bf
n}}$  be a finitely generated standard $\mathbb{N}^d$-graded
algebra over $A$ (i.e., $S$ is generated over $A$
  by elements of total degree 1) and let $M=\bigoplus_{\mathrm{\bf n}\in \mathbb{N}^d}M_{\bf n}$
  be  a finitely generated $\mathbb{N}^d$-graded $S$-module.
  Denote by
$P_M(\mathrm{\bf n})$ the Hilbert polynomial of the Hilbert
function $\ell_A[M_{\mathrm{\bf n}}]$ and  by $\mathrm{Proj} S$
the set of the homogeneous prime ideals of $S$ which do not
contain $S_{++}=\bigoplus_{\mathrm{\bf n}> \mathrm{\bf 0}}
S_{\mathrm{\bf n}}$. Put
 $\mathrm{Supp}_{++}M=\{P\in \mathrm{Proj} S\;|\;M_P\ne 0\}$ and $\dim \mathrm{Supp}_{++}M = s.$
Then  by  \cite[Theorem 4.1]{HHRT}, $\deg P_M(\mathrm{\bf n}) =
s.$ Since $P_M(\mathrm{\bf n})$ is a numerical polynomial, one can
write $P_M(\mathbf{n}) = \sum_{\mathbf{k}\in \mathbb{N}^d}e(M;
\mathbf{k})\binom{\mathbf{n}+\mathbf{k}}{\mathbf{k}},$ $e(M;
\mathbf{k}) \in \mathbb{Z}$
 and
$\binom{\mathbf{n}+\mathbf{k}}{\mathbf{k}} =\binom{n_1+
k_1}{k_1}\cdots \binom{n_d+ k_d}{k_d}$ for all $\mathbf{k}=
(k_1,\ldots,k_d)$ and  $\mathbf{n}= (n_1,\ldots,n_d)\in
\mathbb{N}^d.$

\footnotetext{\begin{itemize}\item[ ]This research is funded by Vietnam National Foundation for Science and Technology Development (NAFOSTED) under grant number 101.04.2015.01. \item[ ]{\bf Mathematics
Subject  Classification (2010):} Primary 13H15. Secondary 13A02,
13E05, 13E10, 14C17. \item[ ]{\bf  Key words and phrases:}  Mixed
multiplicity, Euler-Poincare characteristic; Koszul complex.
\end{itemize}}

In past years, one studied the mixed multiplicities concerning the
coefficients of the terms of highest total degree in the Hilbert
polynomial $P_M(\mathbf{n})$, i.e., the mixed multiplicities $e(M;
\mathbf{k})$ of $M$ of the type $\mathbf{k}$ with $| \mathrm{\bf
k}|\; = k_1 + \cdots + k_d = s$. These mixed multiplicities are
briefly called  the {\it original
 mixed multiplicities} (or  the {\it mixed multiplicities of highest degree}). And the original mixed
multiplicity theory has attracted much attention and has been
continually developed (see e.g. [2$-$13; 15$-$35]).

In this paper, we consider $e(M; \mathbf{k})$ such that $e(M;
\mathbf{h})=0$  for all  $\mathbf{h} > \mathbf{k}$ which concerns
the coefficient of the maximal  term of degree $\mathbf{k}$ in the
Hilbert polynomial of $M.$  Then $e(M; \mathbf{k})$ is  called the
{\it  mixed multiplicity $($of maximal degree$)$ of $M$ of
 the type} $\mathbf{k}$ (Definition \ref{de2.0}).
 Proposition \ref{thm2.1} proves that $e(M;\mathbf{k})$  are non-negative integers.
Note that  the presence of the mixed multiplicity
$e(M;\mathbf{k})$ with $| \mathrm{\bf k}|\; < \deg
P_M(\mathbf{n})$ often  arises from the process of transforming
the mixed multiplicities of highest degree (see \cite{VT3}).
Moreover, the natural appearance of these mixed multiplicities
 is also expressed via the relationship between their existence
 and the existence of other familiar  objects. This fact is shown by Proposition \ref{pro2.000} which characterizes the existence of the mixed multiplicity of the type $\bf k$ for a given ${\bf k} \in \mathbb{N}^d.$
In addition, for the  persuasiveness,  Example \ref{ex3.7} showed
the presence of all these mixed multiplicities.

The paper first answers to the question  when  mixed
multiplicities of maximal degrees  are positive and characterizes
these  mixed multiplicities in terms of the length of modules via
filter-regular sequences. Recall that a homogeneous element $a\in
S$ is called an {\it $S_{++}$-filter-regular element with respect
to $M$}  if $(0_M:a)_{\mathrm{\bf n}}=0$ for  all large
$\mathrm{\bf n}.$ And a sequence  $x_1,\ldots, x_t$ in $S$ is
called   an {\it $S_{++}$-filter-regular sequence with respect to
$M$} if
 $x_i$ is an $S_{++}$-filter-regular element with respect to $M/(x_1,\ldots, x_{i-1})M$ for all
 $1 \le i \le t.$ Set ${\bf e}_i= (0, \ldots, {1},  \ldots, 0)
 \in \mathbb{N}^d$ and $S_i= S_{{\bf e}_i}$ for $1 \leq i \leq d.$
A sequence of elements in $\bigcup _{j=1}^dS_j$ consisting of
$k_1$ elements of $S_1,\ldots,k_d$ elements of $S_d$ is called a
sequence of the {\it type} ${\bf k}=(k_1,\ldots,k_d).$
 Then we have
the following result.

\begin{theorem}[Theorem \ref{th2.11}] \label{thm1.1}  Let
$e(M;\mathrm{\bf k})$ be the  mixed multiplicity of maximal degree  of the type
${\bf k}$ of $M.$ Assume that ${\bf x}$ is an
$S_{++}$-filter-regular sequence  of the type ${\bf k}$ of $M.$
  Then we
have  $e(M;\mathrm{\bf k}) =\ell_A\big[\big({M}/{\bf
x}M\big)_{\mathrm{\bf n}}\big]$ for all large $\mathrm{\bf n}.$
And $e(M;\mathrm{\bf k})\ne 0 $\; if and only if
$\dim\mathrm{Supp}_{++}\big(\frac{M}{{\bf x}M}\big)= 0.$
\end{theorem}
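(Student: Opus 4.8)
The plan is to peel off the members of $\mathbf{x}$ one at a time, reducing everything to the single-element case, which in turn rests on the four-term exact sequence attached to multiplication by an $S_{++}$-filter-regular element.

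Fix $a\in S_i$ that is $S_{++}$-filter-regular with respect to $M$. Multiplication by $a$ is a homogeneous $S$-homomorphism $M\xrightarrow{\,\cdot a\,} M(\mathbf{e}_i)$ with kernel $(0_M:a)$ and cokernel $(M/aM)(\mathbf{e}_i)$, so
\[
0\longrightarrow (0_M:a)\longrightarrow M\xrightarrow{\,\cdot a\,} M(\mathbf{e}_i)\longrightarrow (M/aM)(\mathbf{e}_i)\longrightarrow 0
\]
is exact. Applying $\ell_A[(-)_{\mathbf{n}}]$ and using that $(0_M:a)_{\mathbf{n}}=0$ for all large $\mathbf{n}$, the vanishing of the alternating sum of lengths gives $\ell_A[(M/aM)_{\mathbf{n}+\mathbf{e}_i}]=\ell_A[M_{\mathbf{n}+\mathbf{e}_i}]-\ell_A[M_{\mathbf{n}}]$ for all large $\mathbf{n}$, hence the polynomial identity $P_{M/aM}(\mathbf{n})=P_M(\mathbf{n})-P_M(\mathbf{n}-\mathbf{e}_i)$.

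Next I would expand both sides in the binomial basis. The elementary identity $\binom{m}{r}-\binom{m-1}{r}=\binom{m-1}{r-1}$ (applied to the $i$-th factor) shows that $\binom{\mathbf{n}+\mathbf{k}}{\mathbf{k}}-\binom{\mathbf{n}-\mathbf{e}_i+\mathbf{k}}{\mathbf{k}}$ equals $\binom{\mathbf{n}+(\mathbf{k}-\mathbf{e}_i)}{\mathbf{k}-\mathbf{e}_i}$ when $k_i\ge 1$ and equals $0$ when $k_i=0$. Substituting $P_M(\mathbf{n})=\sum_{\mathbf{k}}e(M;\mathbf{k})\binom{\mathbf{n}+\mathbf{k}}{\mathbf{k}}$ into the identity above and reindexing by $\mathbf{k}\mapsto\mathbf{k}+\mathbf{e}_i$, uniqueness of the binomial expansion yields $e(M/aM;\mathbf{k})=e(M;\mathbf{k}+\mathbf{e}_i)$ for every $\mathbf{k}\in\mathbb{N}^d$. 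In particular, if $e(M;\mathbf{k})$ is a mixed multiplicity of maximal degree with $k_i\ge 1$, then $e(M/aM;\mathbf{k}-\mathbf{e}_i)=e(M;\mathbf{k})$, and for every $\mathbf{h}>\mathbf{k}-\mathbf{e}_i$ one has $\mathbf{h}+\mathbf{e}_i>\mathbf{k}$, so $e(M/aM;\mathbf{h})=e(M;\mathbf{h}+\mathbf{e}_i)=0$; thus $e(M/aM;\mathbf{k}-\mathbf{e}_i)$ is again a mixed multiplicity of maximal degree.

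Now I would induct on $|\mathbf{k}|$, dividing successively by the $k_1$ members of $\mathbf{x}$ in $S_1$, then the $k_2$ in $S_2$, and so on. The filter-regular hypothesis on $\mathbf{x}$ supplies at each step an element that is $S_{++}$-filter-regular for the module obtained so far, and the paragraph above shows that the current top type keeps a positive $i$-th coordinate for as long as elements of $S_i$ remain to be used, so the descent from $\mathbf{k}$ down to $\mathbf{0}$ is legitimate. After dividing by all of $\mathbf{x}$ we reach $e(M/\mathbf{x}M;\mathbf{0})=e(M;\mathbf{k})$ with $e(M/\mathbf{x}M;\mathbf{h})=0$ for all $\mathbf{h}>\mathbf{0}$; hence $P_{M/\mathbf{x}M}(\mathbf{n})$ is the constant $e(M;\mathbf{k})$, which is the first assertion. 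For the second, since $\ell_A[(M/\mathbf{x}M)_{\mathbf{n}}]=e(M;\mathbf{k})$ for all large $\mathbf{n}$, the polynomial $P_{M/\mathbf{x}M}$ has degree $0$ precisely when $e(M;\mathbf{k})\ne 0$; by \cite[Theorem 4.1]{HHRT} this degree equals $\dim\mathrm{Supp}_{++}(M/\mathbf{x}M)$, so $e(M;\mathbf{k})\ne 0$ if and only if $\dim\mathrm{Supp}_{++}(M/\mathbf{x}M)=0$.

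The step I expect to be the main obstacle is the bookkeeping in this induction: one must verify that the \emph{maximal-degree} property is genuinely inherited at every stage --- this is exactly what keeps $P_{M/\mathbf{x}M}$ from acquiring spurious higher-degree terms --- and that grouping $\mathbf{x}$ as ``all of $S_1$, then all of $S_2$, and so on'' is compatible with the coordinatewise descent $\mathbf{k}\to\mathbf{k}-\mathbf{e}_i$. The homological input (the four-term sequence together with the vanishing of $(0_M:a)_{\mathbf{n}}$ in large degrees) and the binomial identity are routine, so once the induction is organized correctly the theorem follows.
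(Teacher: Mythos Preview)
Your proof is correct and follows essentially the same path as the paper: the paper packages your one-step exact-sequence computation as $\triangle^{\mathbf{e}_i}P_M=P_{M/aM}$ (Remark~\ref{re2.2}(ii)) and your coefficient-tracking as the identity $\triangle^{\mathbf{k}}P_M(\mathbf{n})=e(M;\mathbf{k})$ (Proposition~\ref{thm2.1}(ii)), and then combines these in one line. What you organize as an explicit induction on $|\mathbf{k}|$ the paper compresses into the single statement $\triangle^{\mathbf{k}}P_M=P_{M/\mathbf{x}M}$, but the homological and combinatorial content is identical.
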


From this theorem we obtain   Proposition \ref{pro2.000}; Remark
\ref{re2.2a}; Corollary \ref{co2.100}; Example \ref{ex3.7} on  the
existence of mixed multiplicities of maximal degrees.

To consider the relationship between mixed multiplicities of
maximal degrees and other invariants, we turn now to the notion of
mixed multiplicity systems and related invariants.
  Recall that a sequence  ${\bf y}$ of  the type ${\bf k} \in \mathbb{N}^d$
 is called a {\it mixed
multiplicity system of $M$ of the type  $\mathrm{\bf k}$ } if
  $\dim\mathrm{Supp}_{++}\big(\frac{M}{{\bf y}M}\big)\leq 0.$  Let ${\bf x}=  x_1,\ldots, x_n$ be a mixed  multiplicity system of
$M$ of the type  $\mathrm{\bf k}.$  Denote by $H_i({\bf x},M)$ the
$i$th  Koszul homology module of $M$ with respect to ${\bf x}$.
Then  one can define that is called the {\it Euler-Poincare
characteristic} $ \chi({\bf x}, M)= \sum_{i=0}^n (-1)^i
\ell_A[H_i({\bf x},M)_{\mathrm{\bf n}}]$ (a constant for all
$\mathrm{\bf n}\gg \bf0$). And one also define the {\it mixed
multiplicity symbol} \, $\widetilde{e}({\bf x}, M)$ as follows. If
$n=0,$ then $\ell_A[M_{\mathrm{\bf n}}]= c \;(\mathrm{const})$ for
all  $\mathrm{\bf n} \gg \bf 0$ and set $\widetilde{e}({\bf x}, M)
= \widetilde{e}(\emptyset, M)= c.$ If $n > 0,$  set
$\widetilde{e}({\bf x}, M) = \widetilde{e}({\bf x}', M/x_1M) -
\widetilde{e}({\bf x}', 0_M:x_1),$ here ${\bf x}' = x_2,\ldots,
x_n$
    (see \cite{VT3}).


  With the above notations, the main theorem of this paper is stated as follows.
\begin{theorem}[Theorem \ref{th2.00}] \label{thm1.2}
The mixed multiplicity of
 maximal degree of $M$
of the type $\mathrm{\bf k}$ is defined if and only if there
exists a mixed  multiplicity system ${\bf x}$ of $M$ of the type
$\mathrm{\bf k}.$  In this case,  we have $$\chi({\bf x}, M) =
\widetilde{e}({\bf x}, M) = e(M;\mathrm{\bf k}).$$
\end{theorem}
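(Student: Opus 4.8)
The plan is to prove the equivalence first, then establish the chain of equalities by induction on $n = |\mathbf{k}|$. For the equivalence: if $e(M;\mathbf{k})$ is defined, then $e(M;\mathbf{h}) = 0$ for all $\mathbf{h} > \mathbf{k}$, which forces $\deg P_M(\mathbf{n}) \le |\mathbf{k}|$ in a suitable partial-degree sense; invoking $\deg P_M = \dim\operatorname{Supp}_{++}M$ from \cite[Theorem 4.1]{HHRT} together with a prime-avoidance argument (the residue field is infinite, so generic elements of $S_1, \ldots, S_d$ in the right numbers can be chosen to cut down $\operatorname{Supp}_{++}$), one produces a sequence $\mathbf{x}$ of type $\mathbf{k}$ with $\dim\operatorname{Supp}_{++}(M/\mathbf{x}M) \le 0$, i.e.\ a mixed multiplicity system. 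Conversely, given such an $\mathbf{x}$, one reduces to the filter-regular case: a general sequence of type $\mathbf{k}$ is $S_{++}$-filter-regular with respect to $M$ up to the point where the support drops to dimension $0$, so Theorem \ref{thm1.1} applies and shows $e(M;\mathbf{k})$ exists and equals $\ell_A[(M/\mathbf{x}M)_{\mathbf{n}}]$ for $\mathbf{n}\gg\mathbf 0$.

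For the equalities, the cleanest route is to show $\chi(\mathbf{x},M) = \widetilde e(\mathbf{x},M)$ for \emph{any} mixed multiplicity system of type $\mathbf{k}$ (not necessarily filter-regular), and separately that $\widetilde e(\mathbf{x},M) = e(M;\mathbf{k})$. The identity $\chi(\mathbf{x},M) = \widetilde e(\mathbf{x},M)$ follows by induction on $n$ using the long exact sequence of Koszul homology associated to multiplication by $x_1$ on the complex $K(\mathbf{x}',-)$: this gives the recursion $\chi(\mathbf{x},M) = \chi(\mathbf{x}',M/x_1M) - \chi(\mathbf{x}', 0_M:x_1)$, which is exactly the recursion defining $\widetilde e$, and the base case $n=0$ is the tautology $\chi(\emptyset,M) = \ell_A[M_{\mathbf{n}}] = \widetilde e(\emptyset,M)$. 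One must check that $\mathbf{x}'$ remains a mixed multiplicity system of type $\mathbf{k} - \mathbf{e}_{j(1)}$ for both $M/x_1M$ and $0_M:x_1$ so the induction hypothesis applies; this is a routine support computation since $0_M:x_1$ is a submodule of $M$ and killing $x_1$ cannot raise $\dim\operatorname{Supp}_{++}$.

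For $\widetilde e(\mathbf{x},M) = e(M;\mathbf{k})$, I would again induct on $n$, but now it is convenient to first replace $\mathbf{x}$ by a \emph{generic} sequence $\mathbf{x}^\ast$ of type $\mathbf{k}$ that is $S_{++}$-filter-regular with respect to $M$ (possible by prime avoidance, and $e(M;\mathbf{k})$ does not depend on the choice). One shows both $\chi$ and $\widetilde e$ are unchanged when passing to a filter-regular sequence: for $\widetilde e$ this uses that $0_M:x_1^\ast$ is supported in dimension $< |\mathbf{k}|$ in the relevant partial sense so its contribution to the top term vanishes, and for $\chi$ the higher Koszul homologies are eventually zero in each graded piece by the filter-regular property; hence $\widetilde e(\mathbf{x}^\ast, M) = \ell_A[(M/\mathbf{x}^\ast M)_{\mathbf{n}}]$ for $\mathbf{n}\gg\mathbf 0$, which is $e(M;\mathbf{k})$ by Theorem \ref{thm1.1}.

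The main obstacle I anticipate is the bookkeeping in the reduction to filter-regular sequences: one must control, term by term in the multigraded Hilbert polynomial, how $e(M;\mathbf{h})$ for the various $\mathbf{h} \le \mathbf{k}$ behave under passage to $M/x_1M$ and $0_M:x_1$, and verify that the "lower" terms $\mathbf{h} < \mathbf{k}$ and the torsion submodules $0_M:x_i^\ast$ contribute nothing to the coefficient indexed by $\mathbf{k}$. This is where the hypothesis that $\mathbf{k}$ is \emph{maximal} (so $e(M;\mathbf{h}) = 0$ for $\mathbf{h} > \mathbf{k}$) is essential and must be used carefully to ensure the recursions for $\chi$, $\widetilde e$, and the leading behavior of $\ell_A[(M/\mathbf{x}M)_{\mathbf n}]$ all match up at the $\mathbf{k}$-coordinate; the generic-sequence argument, relying on the infinite residue field, is the technical engine that makes this alignment possible.
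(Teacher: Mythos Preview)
Your proposal is workable in spirit but takes a substantially harder route than the paper, and the place you flag as ``the main obstacle'' is exactly where the paper's argument bypasses the difficulty entirely.

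The paper's proof is very short because it routes everything through the single invariant $\triangle^{\mathbf{k}} P_M(\mathbf{n})$. The equivalence of ``$e(M;\mathbf{k})$ is defined'' and ``there exists a mixed multiplicity system of type $\mathbf{k}$'' is already Proposition~\ref{pro2.000}. For the equalities, Proposition~\ref{thm2.1}(ii) gives $e(M;\mathbf{k}) = \triangle^{\mathbf{k}} P_M(\mathbf{n})$ directly from the definition of a maximal-degree coefficient, and then the paper simply cites \cite[Theorem~3.7]{VT3} for $\chi(\mathbf{x},M) = \widetilde{e}(\mathbf{x},M) = \triangle^{\mathbf{k}} P_M(\mathbf{n})$, valid for \emph{every} mixed multiplicity system $\mathbf{x}$ of type $\mathbf{k}$. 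No reduction to filter-regular sequences is needed at this stage.

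The point you are missing is that $\widetilde{e}(\mathbf{x},M) = \triangle^{\mathbf{k}} P_M(\mathbf{n})$ can be proved directly by induction, without any genericity hypothesis on $\mathbf{x}$: from the exact sequence $0 \to (0_M:x_1)_{\mathbf{n}} \to M_{\mathbf{n}} \xrightarrow{x_1} M_{\mathbf{n}+\mathbf{e}_i} \to (M/x_1M)_{\mathbf{n}+\mathbf{e}_i} \to 0$ one gets, for large $\mathbf{n}$, the polynomial identity $P_{M/x_1M}(\mathbf{n}) - P_{0_M:x_1}(\mathbf{n}) = \triangle^{\mathbf{e}_i} P_M(\mathbf{n})$ (up to a harmless shift), so the recursion defining $\widetilde{e}$ matches the recursion for the iterated difference exactly. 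This is the content of \cite[Theorem~3.7]{VT3}. Once you have this, independence of $\widetilde{e}$ from the choice of $\mathbf{x}$ is automatic, and the link to $e(M;\mathbf{k})$ is just Proposition~\ref{thm2.1}(ii).

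Your plan to first replace $\mathbf{x}$ by a generic filter-regular sequence $\mathbf{x}^\ast$, and then argue that $\chi$ and $\widetilde{e}$ are unchanged under this replacement, is where the real gap lies: you give no mechanism for comparing $\widetilde{e}(\mathbf{x},M)$ with $\widetilde{e}(\mathbf{x}^\ast,M)$ other than the very identity you are trying to prove. The Koszul induction you sketch for $\chi = \widetilde{e}$ is fine and is indeed how \cite{VT3} proceeds, but the second half of your argument is circular as stated. Replacing it with the finite-difference identity above closes the gap and recovers the paper's proof.
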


  As applications of this theorem, we obtain Corollary \ref{co2.10d}
which  characterizes the  positivity of mixed
  multiplicities; Corollary \ref{co2.10a} on the additivity
  of  mixed
multiplicities of
 maximal degrees; and the following formula which transforms  mixed
  multiplicities of
 maximal degrees via  mixed  multiplicity systems.

\begin{corollary}[Corollary \ref{co2.10}]
 Let ${\bf x} = x_1, \ldots, x_s$ be  a  mixed  multiplicity system  of $M$ of the type
 $\mathrm{\bf k}.$
  Denote by $\mathrm{\bf h}_i= ({h_{i}}_1,\ldots,{h_{i}}_d)$ the type of a subsequence
  $ x_1,\ldots,x_i$ of ${\bf x}$ for each $1\leq i \leq s.$
  Then for all
large $\mathrm{\bf n},$ we have
  $$e(M;\mathrm{\bf k}) =
\ell_A\big[\big({M}/{{\bf x}M}\big)_{\mathrm{\bf n}}\big]-
\sum_{i=1}^se\bigg(\frac{(x_1,\ldots,x_{i-1})M:
x_i}{(x_1,\ldots,x_{i-1})M}; {\bf k}-{\bf h}_i\bigg).$$
\end{corollary}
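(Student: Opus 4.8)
The plan is to combine Theorem~\ref{thm1.2} with the recursive definition of the mixed multiplicity symbol $\widetilde{e}$. Set $M_0=M$ and $M_i=M/(x_1,\ldots,x_i)M$ for $1\le i\le s$, so that $M_i=M_{i-1}/x_iM_{i-1}$, $M_s=M/\mathbf{x}M$, and put
$$N_i\;=\;\frac{(x_1,\ldots,x_{i-1})M:x_i}{(x_1,\ldots,x_{i-1})M}\;=\;0_{M_{i-1}}:x_i .$$
Since $\mathbf{x}$ is a mixed multiplicity system of $M$ of type $\mathbf{k}$, Theorem~\ref{thm1.2} gives that $e(M;\mathbf{k})$ is defined and that $e(M;\mathbf{k})=\widetilde{e}(\mathbf{x},M)$. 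Moreover $\dim\mathrm{Supp}_{++}(M/\mathbf{x}M)\le 0$, so by \cite[Theorem 4.1]{HHRT} the Hilbert polynomial $P_{M/\mathbf{x}M}$ is a constant, and for all large $\mathbf{n}$ one has $\ell_A[(M/\mathbf{x}M)_{\mathbf{n}}]=\widetilde{e}(\emptyset,M/\mathbf{x}M)=:c$.

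I would then unroll the recursion $\widetilde{e}(\mathbf{y},N)=\widetilde{e}(\mathbf{y}',N/y_1N)-\widetilde{e}(\mathbf{y}',0_N:y_1)$ along its ``quotient'' branch only: using the identifications $M_{i-1}/x_iM_{i-1}=M_i$ and $0_{M_{i-1}}:x_i=N_i$, a telescoping argument produces
$$\widetilde{e}(\mathbf{x},M)\;=\;\widetilde{e}(\emptyset,M_s)-\sum_{i=1}^{s}\widetilde{e}(x_{i+1},\ldots,x_s;\,N_i)\;=\;c-\sum_{i=1}^{s}\widetilde{e}(x_{i+1},\ldots,x_s;\,N_i).$$

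The key step is to identify each summand as $\widetilde{e}(x_{i+1},\ldots,x_s;N_i)=e(N_i;\mathbf{k}-\mathbf{h}_i)$, which by Theorem~\ref{thm1.2} reduces to checking that $x_{i+1},\ldots,x_s$ is a mixed multiplicity system of $N_i$ of type $\mathbf{k}-\mathbf{h}_i$. The type is forced, since $\mathbf{h}_i$ is the type of $x_1,\ldots,x_i$ and $\mathbf{x}$ has type $\mathbf{k}$, so $x_{i+1},\ldots,x_s$ has type $\mathbf{k}-\mathbf{h}_i$. For the dimension condition, observe that $N_i/(x_{i+1},\ldots,x_s)N_i$ is a quotient of the submodule $N_i$ of $M_{i-1}$ and is annihilated by $x_i$ (because $x_iN_i=0$) as well as by $x_{i+1},\ldots,x_s$; hence every homogeneous prime in its $\mathrm{Supp}_{++}$ lies in $\mathrm{Supp}_{++}(M_{i-1})$ and contains $(x_i,\ldots,x_s)$, so that
$$\mathrm{Supp}_{++}\!\big(N_i/(x_{i+1},\ldots,x_s)N_i\big)\;\subseteq\;\mathrm{Supp}_{++}\!\big(M_{i-1}/(x_i,\ldots,x_s)M_{i-1}\big)\;=\;\mathrm{Supp}_{++}(M_s),$$
whose dimension is $\le 0$. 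I expect this containment to be the only non-formal point: it simultaneously makes Theorem~\ref{thm1.2} applicable to each $N_i$ and makes the recursion legitimate at every node of the unrolling.

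Finally, substituting $\widetilde{e}(x_{i+1},\ldots,x_s;N_i)=e(N_i;\mathbf{k}-\mathbf{h}_i)$ into the telescoping identity and recalling $e(M;\mathbf{k})=\widetilde{e}(\mathbf{x},M)$, $c=\ell_A[(M/\mathbf{x}M)_{\mathbf{n}}]$, and the definition of $N_i$, I obtain
$$e(M;\mathbf{k})=\ell_A\big[(M/\mathbf{x}M)_{\mathbf{n}}\big]-\sum_{i=1}^{s}e\bigg(\frac{(x_1,\ldots,x_{i-1})M:x_i}{(x_1,\ldots,x_{i-1})M};\,\mathbf{k}-\mathbf{h}_i\bigg)$$
for all large $\mathbf{n}$, which is the assertion.
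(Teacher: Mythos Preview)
Your proof is correct and follows essentially the same route as the paper: unroll the recursion defining $\widetilde{e}(\mathbf{x},M)$ along the quotient branch to obtain $\widetilde{e}(\mathbf{x},M)=\widetilde{e}(\emptyset,M/\mathbf{x}M)-\sum_{i=1}^s\widetilde{e}(x_{i+1},\ldots,x_s;N_i)$, then apply Theorem~\ref{th2.00} on both sides. The only difference is that the paper simply asserts that $x_{i+1},\ldots,x_s$ is a mixed multiplicity system of $N_i$, whereas you supply the explicit support-containment argument (using $x_iN_i=0$ to get $\mathrm{Supp}_{++}(N_i/(x_{i+1},\ldots,x_s)N_i)\subseteq\mathrm{Supp}_{++}(M/\mathbf{x}M)$); this extra detail is a welcome clarification but not a change of method.
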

And by Theorem 1.1 and Corollary 1.3, we immediately get the
following result.
\begin{corollary}[Corollary \ref{pro2.000f}]
 Let ${\bf x}$ be  a  mixed  multiplicity system  of
$M$ of the type
 $\mathrm{\bf k}.$ Then we
have $e(M;\mathrm{\bf k}) \le \ell_A\big[\big({M}/{{\bf
x}M}\big)_{\mathrm{\bf n}}\big]$ for all large $\mathrm{\bf n},$
and equality holds  if ${\bf x}$ is an $S_{++}$-filter-regular
sequence.
\end{corollary}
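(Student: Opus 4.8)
The plan is to obtain both assertions as an immediate consequence of Corollary~\ref{co2.10} and Theorem~\ref{th2.11}, together with the non-negativity of mixed multiplicities of maximal degree recorded in Proposition~\ref{thm2.1}. There is no new machinery to set up here; the work has already been done in the preceding results, and this statement just packages two of their consequences.

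Since $\mathbf{x}$ is a mixed multiplicity system of $M$ of the type $\mathbf{k}$, Theorem~\ref{th2.00} guarantees that $e(M;\mathbf{k})$ is defined, so Corollary~\ref{co2.10} applies and gives, for all large $\mathbf{n}$,
$$e(M;\mathbf{k}) = \ell_A\big[\big(M/\mathbf{x}M\big)_{\mathbf{n}}\big] - \sum_{i=1}^{s} e\bigg(\frac{(x_1,\ldots,x_{i-1})M : x_i}{(x_1,\ldots,x_{i-1})M};\; \mathbf{k}-\mathbf{h}_i\bigg),$$
where $\mathbf{h}_i$ denotes the type of the subsequence $x_1,\ldots,x_i$. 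Each term of the sum on the right is itself a mixed multiplicity of maximal degree (this is part of the content of Corollary~\ref{co2.10}), hence a non-negative integer by Proposition~\ref{thm2.1}. Therefore the subtracted sum is $\ge 0$, and we conclude $e(M;\mathbf{k}) \le \ell_A\big[\big(M/\mathbf{x}M\big)_{\mathbf{n}}\big]$ for all large $\mathbf{n}$, which is the first assertion.

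For the equality statement, I would assume in addition that $\mathbf{x}$ is an $S_{++}$-filter-regular sequence. Then $\mathbf{x}$ is an $S_{++}$-filter-regular sequence of the type $\mathbf{k}$ of $M$, and, $e(M;\mathbf{k})$ being the mixed multiplicity of maximal degree of the type $\mathbf{k}$ of $M$, Theorem~\ref{th2.11} yields $e(M;\mathbf{k}) = \ell_A\big[\big(M/\mathbf{x}M\big)_{\mathbf{n}}\big]$ for all large $\mathbf{n}$, which is exactly the claimed equality. Equivalently, filter-regularity forces $(x_1,\ldots,x_{i-1})M : x_i = (x_1,\ldots,x_{i-1})M$ in all large degrees for every $i$, so each subtracted term in the displayed formula vanishes. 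The argument is thus a two-line combination of established results; the only point that needs a moment's attention is the verification that the summands appearing in Corollary~\ref{co2.10} are genuinely mixed multiplicities of maximal degree, so that Proposition~\ref{thm2.1} may be applied to them — but this is precisely what is asserted there, so no real obstacle remains.
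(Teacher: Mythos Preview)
Your proof is correct and follows exactly the route the paper intends: the paper simply states that the corollary is immediate from Theorem~\ref{th2.11} and Corollary~\ref{co2.10}, and your argument spells out precisely how---using Corollary~\ref{co2.10} together with the non-negativity from Proposition~\ref{thm2.1} for the inequality, and Theorem~\ref{th2.11} for the equality case. The only addition is your explicit observation that the summands in Corollary~\ref{co2.10} are themselves mixed multiplicities of maximal degree (which is implicit in its proof via Theorem~\ref{th2.00}), so Proposition~\ref{thm2.1} applies; this is a fair clarification rather than a different approach.
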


  Moreover, applying Theorem \ref{thm1.2} for
mixed multiplicities of maximal degrees of ideals, we get  several
corollaries (see Theorem \ref{th4.11}, Corollary \ref {pro2.000c},
Corollary \ref{co2.100c}, Corollary \ref{co4.10}, Corollary \ref
{pro2.000a}, Corollary \ref{co3.10d}, Theorem \ref {th4.8}) in
Section 3.

The results of  this paper  show that many important properties
 of the mixed multiplicities of highest  degree
  not only are still true but also are more natural in the broader  class of the mixed multiplicities of maximal degrees.  Further,
  we  see that
    because  many constrained conditions in
   the original mixed multiplicity theory can be eliminated,
  statements and proofs sometimes become more convenient in  the  class of these new objects.
 Moreover, we hope that these
objects and results on them  not only are  a pure extension,
   but also will bring a certain geometrical  significance.

 The paper is divided into three   sections. Section 2 is devoted to the discussion of
  mixed multiplicities of multi-graded modules.
  Section 3 gives applications of Section 2  to  mixed multiplicities of ideals.

\section{Mixed Multiplicities Of Graded Modules}

This section  studies a class of
 mixed multiplicities which concern the coefficients of the  maximal  terms in the Hilbert
polynomial of multi-graded modules.

 Let $d$ be a positive integer. Put ${\bf e}_i= (0, \ldots, \underset{(i)}{1},  \ldots, 0)
 \in \mathbb{N}^d$ for each $1 \leq i \leq d$ and
 $\mathrm{\bf k}!= k_1!\cdots k_d!;$\; $\mid\mathrm{\bf k}\mid = k_1+\cdots+k_d $
for any $\mathrm{\bf k}= (k_1,\ldots,k_d)\in \mathbb{N}^d.$
Moreover, set ${\bf 0} = (0,\ldots, 0) \in \mathbb{N}^d;$ ${\bf 1}
= (1,\ldots, 1) \in \mathbb{N}^d$ and $\mathrm{\bf n}^\mathrm{\bf
k}= n_1^{k_1}\cdots n_d^{k_d}$ for each $\mathrm{\bf n},
\mathrm{\bf k}\in \mathbb{N}^d$ and $\mathrm{\bf n} \ge {\bf 1}.$
Let $S=\bigoplus_{\mathrm{\bf n}\in \mathbb{N}^d}S_{\mathrm{\bf
n}}$  be a finitely generated standard $\mathbb{N}^d$-graded
algebra over $A$ (i.e., $S$ is generated over $A$
  by elements of total degree 1) and let $M=\bigoplus_{\bf n\ge \bf 0}M_{\bf n}$
  be  a finitely generated  $\mathbb{N}^d$-graded $S$-module.
  Set $S_{++}=\bigoplus_{\mathrm{\bf n}\geq \bf 1}S_{\mathrm{\bf n}}$ and
  $S_i= S_{{\bf e}_i}$ for any $ 1 \le i \le d.$
Denote by $\mathrm{Proj} S$ the set of the homogeneous prime
ideals of $S$ which do not contain $S_{++}$. Put
 $$\mathrm{Supp}_{++}M=\{P\in \mathrm{Proj} S\;|\;M_P\ne 0\}.$$
     By \cite[Theorem 4.1]{HHRT},
    $\ell_A[M_{\mathrm{\bf n}}]$ is a polynomial  for all large $\mathrm{\bf n}.$
    Denote by
$P_M(\mathrm{\bf n})$ the Hilbert polynomial of the Hilbert
function $\ell_A[M_{\mathrm{\bf n}}]$.

\begin{remark}\label{re2.0} \rm
 If we assign $\dim
\mathrm{Supp}_{++}M= -\infty$ to the case that
$\mathrm{Supp}_{++}M= \emptyset$ and the degree $-\infty$ to the
zero polynomial, then
     by \cite[Theorem 4.1]{HHRT} and \cite[Proposition 2.7]{Vi6}, we always have
     $\deg P_M(\mathrm{\bf n})= \dim \mathrm{Supp}_{++}M.$
\end{remark}

Recall that in particular, if $\dim \mathrm{Supp}_{++}M= s \ge 0$
and
  the terms of total degree $s$ in the polynomial $P_M(\mathrm{\bf n})$ have the form
$ \sum_{\mid\mathrm{\bf k}\mid\;=\;s}e(M;\mathrm{\bf
k})\dfrac{\mathrm{\bf n}^\mathrm{\bf k}}{\mathrm{\bf k}!},$
 then $e(M;\mathrm{\bf k})$ are  non-negative integers not all zero, called
 the {\it  mixed multiplicity of $M$ of the type} ${\bf k}$  \cite{HHRT}.
Obviously, these mixed multiplicities only concern  the
coefficients of the terms of highest total degree in the Hilbert
polynomial $\deg P_M(\mathbf{n}).$ And from now on, these mixed
multiplicities are  called  the {\it  original
 mixed multiplicities} (or  the {\it mixed multiplicities of highest degree}).

Now,  we give a  broader class than the class of the original
mixed multiplicities as in the above introduction.

Since $P_M(\mathrm{\bf n})$ is a numerical polynomial, it is well
known that we can write $$P_M(\mathbf{n}) = \sum_{\mathbf{k}\in
\mathbb{N}^d }e(M;
\mathbf{k})\binom{\mathbf{n}+\mathbf{k}}{\mathbf{k}}.$$  Then
$e(M; \mathbf{k}) \in \mathbb{Z}$ for all $\mathbf{k}\in
\mathbb{N}^d.$ And we would like to select the following objects.

 \vskip 0.4cm
\begin{definition}\label{de2.0} We say that  $e(M; \mathbf{k})$ is  the {\it mixed
multiplicity of $M$ of the type} ${\bf k}$ if $e(M; \mathbf{h})=0$
for all $\mathbf{h} > \mathbf{k}.$ These   mixed multiplicities
are called the {\it mixed multiplicities  of
 maximal degrees}.
 \end{definition}
\begin{remark}\label{re2.2c} \rm Note that in the above
definition, the  mixed multiplicity of $M$ of the type ${\bf k}$
does not depend on $\dim \mathrm{Supp}_{++}M.$ And if all the
mixed multiplicities of highest degree of $M$ are positive, then
the set of the mixed multiplicities of maximal degrees of $M$ and
the set of the original mixed multiplicities of $M$ are the same.
\end{remark}
Denote by $\triangle ^{\mathrm{\bf k}}f(\mathrm{\bf n})$ the
$\mathrm{\bf k}$-difference of the function
 $f(\mathrm{\bf n})$ for each
$\mathrm{\bf k} \in \mathbb{N}^d.$

If $e(M; \mathbf{k})$ is the  mixed multiplicity of $M$ of the
type ${\bf k},$ then it can be verified that $\triangle
^{\mathrm{\bf k}}[e(M;
\mathbf{h})\binom{\mathbf{n}+\mathbf{h}}{\mathbf{h}}] = 0$ for all
$\mathbf{h} \ne \mathbf{k}.$  Hence $$\triangle ^{\mathrm{\bf
k}}P_M(\mathbf{n}) = \triangle ^{\mathrm{\bf k}}\big[e(M;
\mathbf{k})\binom{\mathbf{n}+\mathbf{k}}{\mathbf{k}}\big] = e(M;
\mathbf{k}).$$ Moreover, in this case, since  $P_M(\mathbf{n})$
takes  positive values  for all large $\bf n,$ it follows that
$\triangle ^{\mathrm{\bf k}}P_M(\mathbf{n}) \geqslant 0$, and so
$e(M; \mathbf{k})\geqslant 0.$  Hence we obtain a result as
follows.

\begin{proposition}\label{thm2.1}   Let $e(M; \mathbf{k})$ be the  mixed multiplicity of
$M$ of the type ${\bf k}.$ Then
\begin{itemize}
\item[$\mathrm{(i)}$] $e(M; \mathbf{k})$ is a non-negative
integer. \item[$\mathrm{(ii)}$]$\triangle ^{\mathrm{\bf
k}}P_M(\mathbf{n}) = e(M; \mathbf{k}).$
\end{itemize}
\end{proposition}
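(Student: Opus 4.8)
The two parts are closely linked: (ii) is the computational heart, and once it is in hand (i) follows by combining integrality of the coefficients with one positivity input. So I would establish (ii) first.

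For (ii) the plan is a term-by-term evaluation. Factor the difference operator as $\triangle^{\mathbf{k}}=\triangle_1^{k_1}\circ\cdots\circ\triangle_d^{k_d}$, a composition of one-variable operators, and recall the elementary identity $\triangle_i^{k_i}\binom{n_i+h_i}{h_i}=\binom{n_i+h_i}{\,h_i-k_i\,}$, with the usual convention that a binomial coefficient with negative lower entry is $0$. Hence $\triangle^{\mathbf{k}}\binom{\mathbf{n}+\mathbf{h}}{\mathbf{h}}=\prod_{i=1}^d\binom{n_i+h_i}{\,h_i-k_i\,}$, which is the zero function whenever $h_i<k_i$ for some $i$, and which is the constant $1$ when $\mathbf{h}=\mathbf{k}$. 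Now apply $\triangle^{\mathbf{k}}$ to the integral expansion $P_M(\mathbf{n})=\sum_{\mathbf{h}\in\mathbb{N}^d}e(M;\mathbf{h})\binom{\mathbf{n}+\mathbf{h}}{\mathbf{h}}$: every term with $\mathbf{h}\not\ge\mathbf{k}$ is annihilated by the operator, and every term with $\mathbf{h}>\mathbf{k}$ is annihilated because $e(M;\mathbf{h})=0$ by the defining property of a mixed multiplicity of maximal degree (Definition \ref{de2.0}); only the term $\mathbf{h}=\mathbf{k}$ survives, giving $\triangle^{\mathbf{k}}P_M(\mathbf{n})=e(M;\mathbf{k})$.

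For (i), integrality is free, since the $e(M;\mathbf{h})$ are by definition the coefficients in the integral expansion of the numerical polynomial $P_M$, so in particular $e(M;\mathbf{k})\in\mathbb{Z}$. For non-negativity I would \emph{not} argue from the bare fact that $P_M$ is eventually positive (for a general numerical polynomial satisfying the vanishing condition this does not force $\triangle^{\mathbf{k}}P_M\ge0$), but would bring in the module structure through filter-regular sequences. Since $A/\frak m$ is infinite, and since every $P\in\mathrm{Proj}\,S$ fails to contain $S_{++}$ and hence, by standardness, fails to contain any $S_i$, prime avoidance provides at each stage an $S_{++}$-filter-regular element in the required $S_i$; thus one builds an $S_{++}$-filter-regular sequence $\mathbf{x}$ of type $\mathbf{k}$ with respect to $M$. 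If $a\in S_i$ is filter-regular with respect to a module $M'$, then multiplication by $a$ is injective on $M'_{\mathbf{n}}$ for all large $\mathbf{n}$, so $\triangle_i\,\ell_A[M'_{\mathbf{n}}]=\ell_A[M'_{\mathbf{n}+\mathbf{e}_i}]-\ell_A[M'_{\mathbf{n}}]=\ell_A[(M'/aM')_{\mathbf{n}+\mathbf{e}_i}]\ge0$ for large $\mathbf{n}$. Iterating this $k_1+\cdots+k_d$ times along $\mathbf{x}$ gives $\triangle^{\mathbf{k}}\ell_A[M_{\mathbf{n}}]=\ell_A[(M/\mathbf{x}M)_{\mathbf{n}+\mathbf{k}}]\ge0$ for all large $\mathbf{n}$; since $\ell_A[M_{\mathbf{n}}]=P_M(\mathbf{n})$ for large $\mathbf{n}$, the left-hand side equals the constant $\triangle^{\mathbf{k}}P_M(\mathbf{n})=e(M;\mathbf{k})$ found in (ii), so $e(M;\mathbf{k})\ge0$.

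The part (ii) computation is routine. The main obstacle is the positivity in (i): one must actually produce the filter-regular sequence of the prescribed type $\mathbf{k}$ and verify that, after killing such an element, the relevant difference of the Hilbert function is again the (shifted) Hilbert function of a finitely generated standard $\mathbb{N}^d$-graded module — that is, $M'_{\mathbf{n}+\mathbf{e}_i}/aM'_{\mathbf{n}}=(M'/aM')_{\mathbf{n}+\mathbf{e}_i}$ for large $\mathbf{n}$ once $a$ is filter-regular — so that the iteration closes up. (An alternative route records, from the same computation as in (ii), the identity $e(M;\mathbf{k})=\mathbf{k}!\cdot[\text{coefficient of }n_1^{k_1}\cdots n_d^{k_d}\text{ in }P_M(\mathbf{n})]$ and then appeals to the known positivity of the top mixed coefficients of a multigraded Hilbert polynomial; but this too ultimately rests on a superficial- or filter-regular-sequence argument.)
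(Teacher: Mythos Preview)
Your argument is correct. Part (ii) is exactly the paper's computation. For part (i), however, you take a different route than the paper's primary argument --- and for good reason.

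The paper argues (i) by saying that since $P_M(\mathbf{n})$ takes nonnegative values for all large $\mathbf{n}$, it follows that $\triangle^{\mathbf{k}}P_M(\mathbf{n})\ge 0$. You explicitly reject this step, and you are right to: for a general numerical polynomial satisfying the vanishing condition it fails. For instance, with $d=2$ and $\mathbf{k}=(1,1)$, the polynomial $P(n_1,n_2)=(n_1-n_2)^2$ has $e(P;\mathbf{h})=0$ for all $\mathbf{h}>(1,1)$, is $\ge 0$ everywhere, yet $\triangle^{(1,1)}P=-2$. So the paper's inference is not valid as a statement about polynomials; it only holds because $P_M$ is the Hilbert polynomial of a module, and that extra structure is precisely what your filter-regular-sequence argument exploits. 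In fact the paper itself notes, immediately after Proposition~\ref{thm2.1}, that (i) ``can also'' be obtained from Theorem~\ref{th2.11} --- which is essentially the argument you give, establishing $e(M;\mathbf{k})=\ell_A[(M/\mathbf{x}M)_{\mathbf{n}}]\ge 0$ via an $S_{++}$-filter-regular sequence $\mathbf{x}$ of type~$\mathbf{k}$. So your approach coincides with the paper's alternative (rigorous) route, and supplies the module-theoretic input that the paper's short argument leaves implicit.
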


Note that one can also get (i) as an immediate consequence of
Theorem \ref{th2.11}.

Next, we discuss filter-regular sequences  as one of tools in the
paper.  The notion of filter-regular sequences was introduced by
Stuckrad and Vogel in \cite{SV}(see \cite{BS}). The theory of
these  sequences became an important tool to study some classes of
singular rings and has been continually developed (see e.g.
\cite{BS, Tr2, Vi6, VT1, VT4}).

 \vskip 0.2cm
\begin{definition}\label{de2.1}  Let $a\in S$ be a homogeneous element.  Then
$a$ is called an {\it $S_{++}$-filter-regular element with respect
to  $M$}  if $(0_M:a)_{\mathrm{\bf n}}=0$ for  all large
$\mathrm{\bf n}.$ Let  $x_1,\ldots, x_t$ be homogeneous elements
in $S$. We call that $x_1,\ldots, x_t$ is an {\it
$S_{++}$-filter-regular sequence  with respect to  $M$} if
 $x_i$ is an $S_{++}$-filter-regular element with respect to $M/(x_1,\ldots, x_{i-1})M$ for all
 $1 \le i \le t.$
  \end{definition}
 \vskip 0.2cm

\begin{remark}\label{re2.2} \rm  We need to emphasize the following notes for filter-regular
sequences:
\begin{itemize}
\item[$\mathrm{(i)}$] By \cite [Proposition 2.2 and Note (ii)]
{Vi6}, for each ${\bf k} =(k_1,\ldots,k_d) \in \mathbb{N}^d$ there
exists an $S_{++}$-filter-regular sequence ${\bf x}$  in
$\bigcup_{i=1}^dS_i$ with respect to  $M$ consisting of $k_1$
elements of $S_1,\ldots,k_d$ elements of $S_d$. In this case,
 ${\bf x}$ is called an $S_{++}$-filter-regular sequence of the type
 ${\bf k}.$
 \item[$\mathrm{(ii)}$] If $a \in S_i$ is an
 $S_{++}$-filter-regular element, then by \cite[Remark 2.6]{Vi6}
 we obtain
 $\ell_A[(M/aM)_{\mathrm{\bf n}}]
=\ell_A[M_{\mathrm{\bf n}}]-\ell_A[M_{\mathrm{\bf n}-{\bf e}_i}]$
for large ${\bf n}.$ Hence  $\triangle ^{{\bf e}_i}P_M(\mathrm{\bf
n}) = P_{M/aM}(\mathrm{\bf n}).$  From this it follows that for
any
  $S_{++}$-filter-regular sequence ${\bf x}$ of the type
${\bf k},$ we get  $\triangle ^{\mathrm{\bf k}}P_M(\mathrm{\bf n})
= P_{M/{\bf x}M}(\mathrm{\bf n}).$  \end{itemize}

\end{remark}

In a recently appeared paper \cite{Vi6}, by using
$S_{++}$-filter-regular sequences, Manh and Viet   answered to the
question when original  mixed multiplicities
  are positive and characterized these  mixed multiplicities in
terms of  lengths of modules (see \cite[Theorem 3.4]{Vi6}). This
theorem is developed to a broader class  as the following.
 \vskip 0.2cm
\noindent
\begin{theorem}\label{th2.11}  Let $S$
be a finitely generated standard $\mathbb{N}^d$-graded algebra
over an Artinian local ring $A$ and let $M$ be a finitely
generated standard $\mathbb{N}^d$-graded $S$-module.  Let
$e(M;\mathrm{\bf k})$ be the  mixed multiplicity of maximal degree  of the type
${\bf k}$ of $M.$ Assume that ${\bf x}$ is an
$S_{++}$-filter-regular sequence  of the type ${\bf k}$ of $M.$
  Then we
have  $$e(M;\mathrm{\bf k}) =\ell_A\big[\big({M}/{\bf
x}M\big)_{\mathrm{\bf n}}\big]$$ for all large $\mathrm{\bf n}.$
And $e(M;\mathrm{\bf k})\ne 0 $\; if and only if
$\dim\mathrm{Supp}_{++}\big(\frac{M}{{\bf x}M}\big)= 0.$
\end{theorem}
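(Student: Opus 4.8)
The plan is to induct on $|\mathbf{k}| = k_1 + \cdots + k_d$, the total length of the filter-regular sequence $\mathbf{x}$, reducing the multi-graded statement to a one-step application of Remark \ref{re2.2}(ii). First I would handle the base case $\mathbf{k} = \mathbf{0}$: here $\mathbf{x}$ is empty, and saying $e(M;\mathbf{0})$ is a mixed multiplicity of maximal degree means $e(M;\mathbf{h}) = 0$ for all $\mathbf{h} > \mathbf{0}$, so $P_M(\mathbf{n})$ is the constant $e(M;\mathbf{0})$; by Proposition \ref{thm2.1}(ii) this constant equals $\triangle^{\mathbf{0}} P_M(\mathbf{n}) = \ell_A[M_{\mathbf{n}}]$ for large $\mathbf{n}$, and $e(M;\mathbf{0}) \neq 0$ exactly when this length is eventually a positive constant, i.e. $\deg P_M(\mathbf{n}) = 0$, which by Remark \ref{re2.0} says $\dim \mathrm{Supp}_{++} M = 0$ (the polynomial being identically zero corresponds to $\dim\mathrm{Supp}_{++}M = -\infty$, in which case $e(M;\mathbf{0}) = 0$). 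That settles both assertions when $|\mathbf{k}| = 0$.

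For the inductive step, suppose $|\mathbf{k}| \geq 1$, so $k_i \geq 1$ for some $i$, and write $\mathbf{x} = x_1, \mathbf{x}'$ where we may assume $x_1 \in S_i$ is an $S_{++}$-filter-regular element with respect to $M$ and $\mathbf{x}'$ is an $S_{++}$-filter-regular sequence with respect to $M/x_1 M$ of type $\mathbf{k}' := \mathbf{k} - \mathbf{e}_i$. By Remark \ref{re2.2}(ii), $\triangle^{\mathbf{e}_i} P_M(\mathbf{n}) = P_{M/x_1 M}(\mathbf{n})$. The crux is then to show that $e(M/x_1M;\mathbf{k}')$ is a mixed multiplicity of maximal degree of $M/x_1M$ of type $\mathbf{k}'$, i.e. that $e(M/x_1M;\mathbf{h}) = 0$ for all $\mathbf{h} > \mathbf{k}'$; this follows because the coefficients of $P_{M/x_1M}$ in the binomial basis are obtained from those of $P_M$ by the operator $\triangle^{\mathbf{e}_i}$, which (as noted in the discussion preceding Proposition \ref{thm2.1}) kills $\binom{\mathbf{n}+\mathbf{h}}{\mathbf{h}}$ for $\mathbf{h}$ not $\geq \mathbf{e}_i$ and shifts $\binom{\mathbf{n}+\mathbf{h}}{\mathbf{h}} \mapsto \binom{\mathbf{n}+\mathbf{h}-\mathbf{e}_i}{\mathbf{h}-\mathbf{e}_i}$ otherwise, so $e(M/x_1M;\mathbf{g}) = e(M;\mathbf{g}+\mathbf{e}_i)$ for all $\mathbf{g}$; hence vanishing of $e(M;\mathbf{h})$ for $\mathbf{h} > \mathbf{k}$ translates to vanishing of $e(M/x_1M;\mathbf{g})$ for $\mathbf{g} > \mathbf{k}'$, and moreover $e(M/x_1M;\mathbf{k}') = e(M;\mathbf{k})$. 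Applying the induction hypothesis to $M/x_1M$ with sequence $\mathbf{x}'$ and type $\mathbf{k}'$ gives $e(M/x_1M;\mathbf{k}') = \ell_A[((M/x_1M)/\mathbf{x}'(M/x_1M))_{\mathbf{n}}] = \ell_A[(M/\mathbf{x}M)_{\mathbf{n}}]$ for large $\mathbf{n}$, which combined with the previous identity yields $e(M;\mathbf{k}) = \ell_A[(M/\mathbf{x}M)_{\mathbf{n}}]$.

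The non-vanishing statement then comes for free: by induction $e(M/x_1M;\mathbf{k}') \neq 0 \iff \dim\mathrm{Supp}_{++}((M/x_1M)/\mathbf{x}'(M/x_1M)) = 0$, and since $(M/x_1M)/\mathbf{x}'(M/x_1M) = M/\mathbf{x}M$ and $e(M/x_1M;\mathbf{k}') = e(M;\mathbf{k})$, we get $e(M;\mathbf{k}) \neq 0 \iff \dim\mathrm{Supp}_{++}(M/\mathbf{x}M) = 0$. I expect the main obstacle to be the bookkeeping in the inductive step — specifically, verifying cleanly that $e(M/x_1M;\mathbf{g}) = e(M;\mathbf{g}+\mathbf{e}_i)$ for all $\mathbf{g} \in \mathbb{N}^d$ from the identity $P_{M/x_1M}(\mathbf{n}) = \triangle^{\mathbf{e}_i}P_M(\mathbf{n})$, since one must be careful that the binomial expansion of a numerical polynomial is unique and that $\triangle^{\mathbf{e}_i}$ acts on it coordinatewise exactly as claimed. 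A secondary subtlety is ensuring the reordering of $\mathbf{x}$ so that its first element lies in $S_i$ is legitimate — but this is harmless because $\mathbf{x}$ consists of elements drawn from $\bigcup_j S_j$ and we only need \emph{some} valid filter-regular sequence of the prescribed type, and by Remark \ref{re2.2}(i) such sequences exist with the elements grouped in any convenient order, while the length $\ell_A[(M/\mathbf{x}M)_{\mathbf{n}}]$ for large $\mathbf{n}$ is independent of the choice (it equals $\triangle^{\mathbf{k}}P_M(\mathbf{n})$ by Remark \ref{re2.2}(ii)).
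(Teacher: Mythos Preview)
Your proof is correct and follows the same underlying logic as the paper's, just unwound into an explicit induction. The paper's argument is more direct: it simply chains the iterated form of Remark~\ref{re2.2}(ii), namely $\triangle^{\mathbf{k}} P_M(\mathbf{n}) = P_{M/\mathbf{x}M}(\mathbf{n})$, with Proposition~\ref{thm2.1}(ii), namely $\triangle^{\mathbf{k}} P_M(\mathbf{n}) = e(M;\mathbf{k})$, to conclude immediately, and then reads off the non-vanishing criterion via Remark~\ref{re2.0}---so your induction is essentially re-deriving those two already-packaged facts.
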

\begin{proof} First,  we have  $\triangle ^{\mathrm{\bf
k}}P_M(\mathrm{\bf n}) = P_{M/{\bf x}M}(\mathrm{\bf n})$ by Remark
\ref{re2.2}(ii).  Note that $$P_{M/{\bf x}M}(\mathrm{\bf n}) =
\ell_A\big[\big({M}/{{\bf x}M}\big)_{\mathrm{\bf n}}\big]$$ for
all  $\mathrm{\bf n}\gg \bf 0$ and
 $\triangle ^{\mathrm{\bf k}}P_M(\mathrm{\bf n}) = e(M;\mathrm{\bf k})$ by
 Proposition \ref{thm2.1}(ii).
So for all  $\mathrm{\bf n}\gg \bf 0,$ we get $e(M;\mathrm{\bf
k})= \ell_A\big[\big({M}/{{\bf x}M}\big)_{\mathrm{\bf n}}\big].$
From this it follows that $e(M, \mathrm{\bf k}) \ne 0$ if and only
if $\deg P_{M/{\bf x}M}(\mathrm{\bf n}) = 0.$ Remember that
$\dim\mathrm{Supp}_{++}\big({M}/{{\bf x}M}\big)= \deg P_{M/{\bf
x}M}(\mathrm{\bf n})$ by Remark \ref{re2.0}. Thus,  $e(M,
\mathrm{\bf k}) \ne 0$ if and only if
$\dim\mathrm{Supp}_{++}\big({M}/{{\bf x}M}\big)=0.$
 \end{proof}

The following important notions will be used in the next parts of
the paper.

\begin{definition}[\cite{VT3}]\label{de2.2}
Let ${\bf y} = y_1,\ldots, y_n$ be a sequence of elements in
$\bigcup _{j=1}^dS_j$ consisting of $m_1$ elements of
$S_1,\ldots,m_d$ elements of $S_d$. Then ${\bf y}$ is called a
{\it mixed  multiplicity system of $M$ of the type  $\mathrm{\bf
m}= (m_1 ,\ldots,m_d)$ } if
  $\dim\mathrm{Supp}_{++}\big(M/{\bf y}M\big)\leq 0.$

Let ${\bf x}=  x_1,\ldots, x_n$ be a mixed  multiplicity system of
$M$ of the type  $\mathrm{\bf k}.$ Then

\begin{itemize}
\item[$\mathrm{(i)}$] If $n=0,$ then $\ell_A[M_{\mathrm{\bf n}}]=
c \;(\mathrm{const})$ for all  $\mathrm{\bf n} \gg \bf 0$ and set
$\widetilde{e}({\bf x}, M) = \widetilde{e}(\emptyset, M)= c.$ If
$n > 0,$  set $\widetilde{e}({\bf x}, M) = \widetilde{e}({\bf x}',
M/x_1M) - \widetilde{e}({\bf x}', 0_M:x_1),$ here ${\bf x}' =
x_2,\ldots, x_n.$ Then $\widetilde{e}({\bf x}, M)$ is called the
{\it mixed multiplicity symbol of $M$ with respect to $ {\bf x} $
of the type  $\mathrm{\bf k}.$}

\item[$\mathrm{(ii)}$] From the Koszul complex  of $M$ with
respect to ${\bf x}$
$$ 0\longrightarrow K_n({\bf x},M)\longrightarrow K_{n-1}({\bf x},M)\longrightarrow
\cdots\longrightarrow K_1({\bf x},M)\longrightarrow K_0({\bf x},M)
\longrightarrow 0,$$  one obtain the
 sequence of the homology modules
$$\ldots H_0({\bf x},M), H_1({\bf x},M),\ldots,H_n({\bf x},M)\ldots.$$
Then $ \chi({\bf x}, M)= \sum_{i=0}^n (-1)^i \ell_A[H_i({\bf
x},M)_{\mathrm{\bf n}}](\mathrm{const})$ for all $\mathrm{\bf
n}\gg \bf0$  is called
 the {\it Euler-Poincare characteristic of $M$ with respect to ${\bf
 x}$ of the type  $\mathrm{\bf k}.$}
\end{itemize}
\end{definition}

The compatibility of mixed multiplicities with other familiar
objects is shown by the following proposition which characterizes
the existence of mixed multiplicities of maximal degrees in
different terms.
 \vskip 0.2cm
\begin{proposition}\label{pro2.000} Let $S$ be a finitely generated standard
$\mathbb{N}^d$-graded algebra over  an Artinian local ring $A$ and
let $M$ be
 a finitely generated standard $\mathbb{N}^d$-graded $S$-module.
Then the following are equivalent:
\begin{itemize}
\item[$\mathrm{(i)}$] There exists an $S_{++}$-filter-regular
sequence ${\bf x}$ of the type ${\bf k}$ such that ${\bf x}$ is a
mixed  multiplicity system of $M.$
 \item[$\mathrm{(ii)}$] There exists  a mixed  multiplicity system
  of $M$ of the type $\mathrm{\bf k}.$
   \item[$\mathrm{(iii)}$] $\triangle ^{\mathrm{\bf
k}}P_M(\mathrm{\bf n})$  is a constant.
 \item[$\mathrm{(iv)}$] The  mixed
multiplicity of $M$ of the type ${\bf k}$ is defined.
  \end{itemize}
\end{proposition}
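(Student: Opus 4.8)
The plan is to establish the cycle of implications $(\mathrm{i}) \Rightarrow (\mathrm{ii}) \Rightarrow (\mathrm{iii}) \Rightarrow (\mathrm{iv}) \Rightarrow (\mathrm{i})$, using the machinery already assembled: the degree formula $\deg P_N(\mathrm{\bf n}) = \dim \mathrm{Supp}_{++}N$ from Remark \ref{re2.0}, the behaviour of $\mathrm{\bf k}$-differences under filter-regular sequences from Remark \ref{re2.2}(ii), and the difference characterization $\triangle^{\mathrm{\bf k}}P_M(\mathrm{\bf n}) = e(M;\mathrm{\bf k})$ from Proposition \ref{thm2.1}(ii).

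First, $(\mathrm{i}) \Rightarrow (\mathrm{ii})$ is immediate, since an $S_{++}$-filter-regular sequence of the type $\mathrm{\bf k}$ that is also a mixed multiplicity system of $M$ is in particular a mixed multiplicity system of $M$ of the type $\mathrm{\bf k}$. For $(\mathrm{ii}) \Rightarrow (\mathrm{iii})$, let $\mathrm{\bf y}$ be a mixed multiplicity system of $M$ of the type $\mathrm{\bf k}$. The subtlety here is that $\mathrm{\bf y}$ need not be filter-regular, so Remark \ref{re2.2}(ii) does not directly apply. I would therefore argue by induction on $|\mathrm{\bf k}|$. When $|\mathrm{\bf k}| = 0$ the condition $\dim \mathrm{Supp}_{++}M \le 0$ forces $\deg P_M(\mathrm{\bf n}) \le 0$ by Remark \ref{re2.0}, so $P_M(\mathrm{\bf n})$ is constant, i.e. $\triangle^{\mathrm{\bf 0}}P_M$ is constant. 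For the inductive step, write $\mathrm{\bf y} = y_1, \mathrm{\bf y}'$ with $y_1 \in S_j$ for some $j$. From the exact sequence $0 \to (0_M : y_1) \to M \xrightarrow{y_1} M \to M/y_1M \to 0$ (with the middle map a degree-$\mathrm{\bf e}_j$ shift) one gets $\triangle^{\mathrm{\bf e}_j}P_M(\mathrm{\bf n}) = P_{M/y_1M}(\mathrm{\bf n}) - P_{0_M:y_1}(\mathrm{\bf n} - \mathrm{\bf e}_j)$. Now $\mathrm{\bf y}'$ is a mixed multiplicity system of both $M/y_1M$ and $0_M:y_1$ (the support of the latter is contained in that of $M$, hence its quotient by $\mathrm{\bf y}'$ has support of dimension $\le 0$), each of type $\mathrm{\bf k} - \mathrm{\bf e}_j$, so by induction $\triangle^{\mathrm{\bf k}-\mathrm{\bf e}_j}P_{M/y_1M}$ and $\triangle^{\mathrm{\bf k}-\mathrm{\bf e}_j}P_{0_M:y_1}$ are both constant; applying $\triangle^{\mathrm{\bf k}-\mathrm{\bf e}_j}$ to the displayed identity and using that a shift does not affect constancy, $\triangle^{\mathrm{\bf k}}P_M(\mathrm{\bf n})$ is constant. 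The main obstacle is precisely this step: one must check that truncating a mixed multiplicity system stays a mixed multiplicity system for the modules appearing in the Koszul-type breakdown, and that the degree bookkeeping on $0_M:y_1$ does not raise the dimension — this is where I expect to spend the most care.

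For $(\mathrm{iii}) \Rightarrow (\mathrm{iv})$, suppose $\triangle^{\mathrm{\bf k}}P_M(\mathrm{\bf n})$ is a constant $c$. Writing $P_M(\mathrm{\bf n}) = \sum_{\mathrm{\bf h}} e(M;\mathrm{\bf h}) \binom{\mathrm{\bf n}+\mathrm{\bf h}}{\mathrm{\bf h}}$ and applying $\triangle^{\mathrm{\bf k}}$ termwise, the term $e(M;\mathrm{\bf h})\binom{\mathrm{\bf n}+\mathrm{\bf h}}{\mathrm{\bf h}}$ contributes $0$ unless $\mathrm{\bf h} \ge \mathrm{\bf k}$, and when $\mathrm{\bf h} \ge \mathrm{\bf k}$ with $\mathrm{\bf h} \ne \mathrm{\bf k}$ it contributes a non-constant polynomial of positive degree in the variables where $h_i > k_i$. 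For the total to be constant, all these contributions must cancel; but by a degree/leading-coefficient argument (looking at the variable $n_i$ for an $i$ with $h_i > k_i$ maximal in some component), the only way for a sum of such polynomials to be constant is for each $e(M;\mathrm{\bf h})$ with $\mathrm{\bf h} > \mathrm{\bf k}$ to vanish. More cleanly: if some $e(M;\mathrm{\bf h}) \ne 0$ with $\mathrm{\bf h} > \mathrm{\bf k}$, pick such an $\mathrm{\bf h}$ maximal in the componentwise order; then $\triangle^{\mathrm{\bf k}}P_M$ contains the term $\triangle^{\mathrm{\bf k}}[e(M;\mathrm{\bf h})\binom{\mathrm{\bf n}+\mathrm{\bf h}}{\mathrm{\bf h}}]$ which has degree $|\mathrm{\bf h} - \mathrm{\bf k}| > 0$, and maximality ensures no other term can cancel its leading behaviour — contradicting constancy. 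Hence $e(M;\mathrm{\bf h}) = 0$ for all $\mathrm{\bf h} > \mathrm{\bf k}$, which is exactly the statement that the mixed multiplicity of type $\mathrm{\bf k}$ is defined.

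Finally, $(\mathrm{iv}) \Rightarrow (\mathrm{i})$: assume $e(M;\mathrm{\bf k})$ is defined. By Remark \ref{re2.2}(i) there exists an $S_{++}$-filter-regular sequence $\mathrm{\bf x}$ of the type $\mathrm{\bf k}$ with respect to $M$. By Remark \ref{re2.2}(ii), $\triangle^{\mathrm{\bf k}}P_M(\mathrm{\bf n}) = P_{M/\mathrm{\bf x}M}(\mathrm{\bf n})$, and by Proposition \ref{thm2.1}(ii) this equals the constant $e(M;\mathrm{\bf k})$. Thus $P_{M/\mathrm{\bf x}M}(\mathrm{\bf n})$ is constant, so $\deg P_{M/\mathrm{\bf x}M}(\mathrm{\bf n}) \le 0$, and by Remark \ref{re2.0} this says $\dim \mathrm{Supp}_{++}(M/\mathrm{\bf x}M) \le 0$ — i.e. $\mathrm{\bf x}$ is a mixed multiplicity system of $M$, which is $(\mathrm{i})$. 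This closes the cycle and proves the proposition.
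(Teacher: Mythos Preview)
Your cycle and three of the four implications match the paper's proof (for $(\mathrm{iv})\Rightarrow(\mathrm{i})$ the paper invokes Theorem~\ref{th2.11} rather than combining Remark~\ref{re2.2}(ii) with Proposition~\ref{thm2.1}(ii), but it is the same computation). The genuine difference is at $(\mathrm{ii})\Rightarrow(\mathrm{iii})$: the paper simply appeals to \cite[Theorem~3.7]{VT3}, which asserts $\chi(\mathrm{\bf x},M)=\widetilde{e}(\mathrm{\bf x},M)=\triangle^{\mathrm{\bf k}}P_M(\mathrm{\bf n})$ for any mixed multiplicity system $\mathrm{\bf x}$ of type $\mathrm{\bf k}$, giving constancy in one stroke; you instead run an induction on $|\mathrm{\bf k}|$ via the four-term exact sequence for multiplication by $y_1$. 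Your route is more elementary and self-contained, avoiding the Koszul/Euler--Poincar\'e machinery entirely, while the paper's is a one-line citation that packages the same recursion inside that formalism. One point in your induction that needs tightening: the claim that $\mathrm{\bf y}'$ is a mixed multiplicity system of $0_M:y_1$ does not follow merely from $\mathrm{Supp}(0_M:y_1)\subseteq\mathrm{Supp}(M)$; what you need is that $y_1$ annihilates $0_M:y_1$, so $\mathrm{\bf y}'(0_M:y_1)=\mathrm{\bf y}(0_M:y_1)$, and then $\mathrm{Supp}\big((0_M:y_1)/\mathrm{\bf y}(0_M:y_1)\big)=\mathrm{Supp}(0_M:y_1)\cap V(\mathrm{\bf y})\subseteq\mathrm{Supp}(M)\cap V(\mathrm{\bf y})=\mathrm{Supp}(M/\mathrm{\bf y}M)$, which has $\mathrm{Supp}_{++}$-dimension $\le 0$ (this is essentially the content of \cite[Lemma~2.7]{VT3}). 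With that fix your argument goes through.
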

 \begin{proof} (i)$\Rightarrow$ (ii) is clear.
   (ii)$\Rightarrow$ (iii): Let ${\bf x}$ be a mixed  multiplicity system
  of $M$ of the type $\mathrm{\bf k}.$   By \cite [Theorem
3.7] {VT3}, we obtain $$\chi({\bf x}, M) = \widetilde{e}({\bf x},
M) = \triangle ^ {\mathrm{\bf k}}P_M(\mathrm{\bf n}).$$ Hence
$\triangle ^{\mathrm{\bf k}}P_M(\mathrm{\bf n})$  is a constant.
(iii)$\Rightarrow$
 (iv): Since $\triangle ^{\mathrm{\bf
k}}P_M(\mathrm{\bf n})$  is a constant, it follows that $\triangle
^{\mathrm{\bf h}}P_M(\mathrm{\bf n})= 0$ for all $\mathrm{\bf h}
> \mathrm{\bf k}.$ Therefore, $e(M;\mathrm{\bf h})=0$ for all $\mathrm{\bf h}
> \mathrm{\bf k}.$ So the  mixed
multiplicity of $M$ of the type ${\bf k}$ is defined.
(iv)$\Rightarrow$ (i): By Remark \ref{re2.2}(i), there exists an
$S_{++}$-filter-regular sequence ${\bf x}$ of the type
 ${\bf k}.$  By Theorem \ref{th2.11}, we get
 $$e(M;\mathrm{\bf k}) = \ell_A\big[\big({M}/{{\bf
x}M}\big)_{\mathrm{\bf n}}\big]$$ for all large $\mathrm{\bf n}.$
Hence $P_{M/{{\bf x}M}}(\mathrm{\bf n})$ is a constant. So
$\dim\mathrm{Supp}_{++}\big(M/{{\bf x}M}\big) \le 0$ by Remark
\ref{re2.0}. Therefore, ${\bf x}$ is a mixed  multiplicity system
of $M.$
 \end{proof}

From Proposition
 \ref{pro2.000} and the proof of Proposition
 \ref{pro2.000}, we obtain some  following effective comments for
 the existence of mixed multiplicities.
\begin{remark}\label{re2.2a}
 Assume that the  mixed
multiplicity of $M$ of the type ${\bf k}$ is defined. Then there
exists a  mixed  multiplicity system ${\bf x} = x_1,\ldots,x_s$ of
$M$ of the type
 $\mathrm{\bf k}$  by Proposition
 \ref{pro2.000}. Now let $ x_1,\ldots,x_i$ be a subsequence of ${\bf x}$
of the type $\mathrm{\bf h}= (h_1,\ldots,h_d)$ for each $1\leq i
\leq s.$  It is easily seen that $x_{i+1},\ldots,x_s$ is a mixed
multiplicity system   of $M/(x_1,\ldots,x_{i})M$ of the type
 $\mathrm{\bf k}-\mathrm{\bf h}.$ Hence the  mixed
multiplicity of $M/(x_1,\ldots,x_{i})M$ of the type ${\bf k}-{\bf
h}$ is also defined  by Proposition
 \ref{pro2.000}. Moreover, from the proof of Proposition
  \ref{pro2.000}, it follows that  any $S_{++}$-filter-regular sequence ${\bf x}$ of the type
 ${\bf k}$ of $M$ is a mixed
multiplicity system of the type
 ${\bf k}$ of $M.$ Hence if $a \in S_i$ is an
 $S_{++}$-filter-regular element and $k_i
> 0,$ then $e(M/aM;\mathrm{\bf k}-{\bf
e}_i)$  is defined.
\end{remark}

Let $a \in S_i$ be an
 $S_{++}$-filter-regular element.
   Now if $e(M;\mathrm{\bf k})$
is defined and $k_i
> 0,$ then $e(M/aM;\mathrm{\bf k}-{\bf
e}_i)$  is defined by Remark \ref{re2.2a}. Since $\triangle ^{{\bf
e}_i}P_M(\mathrm{\bf n}) = P_{M/aM}(\mathrm{\bf n})$  by Remark
\ref{re2.2}(ii), it implies  that $\triangle ^{{\bf k}-{\bf
e}_i}P_{M/aM}(\mathrm{\bf n})= \triangle ^{{\bf k}-{\bf
e}_i}[\triangle ^{{\bf e}_i}P_M(\mathrm{\bf n})]= \triangle ^{{\bf
k}}P_M(\mathrm{\bf n}).$ So $e(M;\mathrm{\bf k}) =
e(M/aM;\mathrm{\bf k}-{\bf e}_i)$ by Proposition \ref{thm2.1}.
From this it follows that for any $S_{++}$-filter-regular sequence
${\bf y}$ of the type $\mathrm{\bf h}$ with $\mathrm{\bf h} \le
\mathrm{\bf k},$ $e(M/{\bf y}M;\mathrm{\bf k}-\mathrm{\bf h})$ is
defined and $e(M;\mathrm{\bf k}) = e(M/{\bf y}M;\mathrm{\bf
k}-\mathrm{\bf h}).$ In particular, if ${\bf x}$ is an
$S_{++}$-filter-regular sequence of the type $\mathrm{\bf k},$
then $e(M/{\bf x}M;\mathrm{\bf 0})$ is defined and
$e(M;\mathrm{\bf k}) = e(M/{\bf x}M;\mathrm{\bf 0}).$

The above facts yield:

\vskip 0.2cm
\begin{corollary}\label{co2.100}
Let $e(M;\mathrm{\bf k})$ be the  mixed multiplicity of the type
${\bf k}$ of $M.$ Assume that ${\bf y}$ is an
$S_{++}$-filter-regular sequence  of the type ${\bf h}$ of $M$
with $\mathrm{\bf h} \le \mathrm{\bf k},$ and ${\bf x}$ is an
$S_{++}$-filter-regular sequence  of the type ${\bf k}$ of $M.$
  Then $e(M/{\bf y}M;\mathrm{\bf k}-\mathrm{\bf h})$ and $e(M/{\bf
x}M;\mathrm{\bf 0})$ are defined. Moreover,  we have
$$e(M;\mathrm{\bf k})= e(M/{\bf y}M;\mathrm{\bf k}-\mathrm{\bf
h})= e(M/{\bf x}M;\mathrm{\bf 0}).$$
\end{corollary}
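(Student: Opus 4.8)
The plan is to reduce the statement to a single application of the difference operator $\triangle$ and then iterate. First I would recall the two facts already isolated in the discussion preceding the statement: by Remark \ref{re2.2}(ii), if $a\in S_i$ is an $S_{++}$-filter-regular element with respect to $M$, then $\triangle^{{\bf e}_i}P_M(\mathrm{\bf n}) = P_{M/aM}(\mathrm{\bf n})$; and by Remark \ref{re2.2a}, if in addition $e(M;\mathrm{\bf k})$ is defined and $k_i>0$, then $e(M/aM;\mathrm{\bf k}-{\bf e}_i)$ is defined. Combining these with Proposition \ref{thm2.1}(ii) gives the one-step version of the corollary, namely
$e(M;\mathrm{\bf k}) = \triangle^{\mathrm{\bf k}}P_M(\mathrm{\bf n}) = \triangle^{\mathrm{\bf k}-{\bf e}_i}\big[\triangle^{{\bf e}_i}P_M(\mathrm{\bf n})\big] = \triangle^{\mathrm{\bf k}-{\bf e}_i}P_{M/aM}(\mathrm{\bf n}) = e(M/aM;\mathrm{\bf k}-{\bf e}_i)$.

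Next I would run an induction on the length of the filter-regular sequence. Write ${\bf y} = a_1,\ldots,a_m$; by definition each $a_j$ lies in some $S_{i_j}$ and is $S_{++}$-filter-regular with respect to $M_{j-1}:=M/(a_1,\ldots,a_{j-1})M$. Since ${\bf h}\le {\bf k}$, at each stage the coordinate $i_j$ of the current type (namely ${\bf k}$ diminished by the type of $a_1,\ldots,a_{j-1}$) is still positive, so the one-step argument applies to $M_{j-1}$ and shows that $e(M_j;\cdot)$ is defined and equal to $e(M_{j-1};\cdot)$ for the appropriate types. Chaining these equalities from $j=1$ to $j=m$ yields that $e(M/{\bf y}M;\mathrm{\bf k}-\mathrm{\bf h})$ is defined and equals $e(M;\mathrm{\bf k})$. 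Applying the identical chain to ${\bf x}$, which has the full type ${\bf k}$, gives that $e(M/{\bf x}M;\mathrm{\bf 0})$ is defined and equals $e(M;\mathrm{\bf k})$; the two displayed equalities of the corollary follow at once.

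The only point requiring care — and the step I expect to be the main, if mild, obstacle — is the bookkeeping of types along the induction: one must verify that deleting the elements of ${\bf y}$ in order never forces a decrease in a coordinate that has already reached zero, which is precisely what ${\bf h}\le{\bf k}$ guarantees, and that the pair $\big(M_{j-1},a_j\big)$ again satisfies the hypotheses of Remark \ref{re2.2a} (filter-regularity is built into the definition of an $S_{++}$-filter-regular sequence, and ``the mixed multiplicity is defined'' is propagated by the one-step result just proved). Everything else is a formal manipulation of $\triangle$ together with Proposition \ref{thm2.1}.
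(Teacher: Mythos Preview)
Your proposal is correct and follows essentially the same route as the paper: the paper's argument, given in the paragraph immediately preceding the corollary, establishes the one-step identity $e(M;\mathrm{\bf k})=e(M/aM;\mathrm{\bf k}-{\bf e}_i)$ via Remark~\ref{re2.2}(ii), Remark~\ref{re2.2a}, and Proposition~\ref{thm2.1}(ii), and then iterates along the filter-regular sequence exactly as you describe. Your explicit attention to the bookkeeping of types (that $\mathrm{\bf h}\le\mathrm{\bf k}$ prevents any coordinate from going negative, and that the hypotheses propagate) is a welcome clarification of what the paper leaves implicit in the phrase ``From this it follows that.''
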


The following example will build a finitely generated standard
$\mathbb{N}^3$-graded algebra  containing full  mixed multiplicity
kinds.

\vskip 0.2cm
\begin{example}\label{ex3.7} \rm Let $k$ be an infinite field and  let
$x_1,x_2,x_3,y_1,y_2,y_3,z_1,z_2,z_3$ be indeterminates. Let
$$B=k[x_1,x_2,x_3,y_1,y_2,y_3,z_1,z_2,z_3]$$ be a finitely generated standard
$\mathbb{N}^3$-graded algebra over $k$ with
$$\deg x_i=(1,0,0),\;\deg y_i=(0,1,0),\; \deg z_i=(0,0,1),\;\;i=1,2,3.$$  Set
$$I=(x_1,y_1,z_1)\cap (x_1,x_2)\cap (y_1,y_2)\cap (z_1,z_2)$$ and  $S=B/I.$
Then  $S$ is a finitely generated standard $\mathbb{N}^3$-graded
algebra over $k$ and $\dim S=7.$ Denote by  $P_S(n_1,n_2,n_3)$ the
Hilbert polynomial of $S.$
 For $x\in B,$ denote by $\bar x$ the image of $x$ in $S.$
Then \cite[Example 3.7]{Vi6} showed that  $\deg P_S(n_1,n_2,n_3)=
4$ and
$$\begin{array}{ll}&e(S;2,2,0)= e(S;2,0,2)=e(S;0,2,2)=1;\\\\
&e(S;3,1,0)=e(S;1,3,0)=e(S;3,0,1)=e(S;1,0,3)
=e(S;0,3,1)=e(S;0,1,3)\\\\&=e(S;4,0,0)
=e(S;0,4,0)=e(S;0,0,4)\\\\
&=e(S;2,1,1)=e(S;1,2,1)=e(S;1,1,2)=0.
\end{array}$$
Hence the mixed multiplicities $e(S;3,0,0);\; e(S;0,3,0);\;
e(S;0,0,3);\; e(S;1,1,1)$ are defined.
 By the symmetry, we get
 $$e(S;3,0,0)= e(S;0,3,0)= e(S;0,0,3).$$
By \cite[Example 3.7]{Vi6},  $\bar x_3,\bar x_2,\bar x_1$ is  an
$S_{++}$-filter-regular sequence consisting of  3 elements in
$S_1$ and $\dfrac{S}{(\bar x_3,\bar x_2,\bar
x_1):S_{++}^\infty}=0.$ So $\dim\mathrm{Supp}_{++}\dfrac{S}{(\bar
x_3,\bar x_2,\bar x_1)} < 0.$ From this it follows that
  $$ \ell_k\bigg[\big(\dfrac{S}{(\bar x_3,\bar x_2,\bar x_1)}\big)_{(n_1,n_2,n_3)}\bigg] = 0$$
  for all $n_1,n_2,n_3 \gg 0.$
Hence by Theorem \ref{th2.11}, we obtain $e(S;3,0,0)=0$. Thus
$e(S;3,0,0)= e(S;0,3,0)= e(S;0,0,3)=0.$

Upon simple computation, we show that $\bar x_3,\bar y_3,\bar z_3$
is an $S_{++}$-filter-regular sequence of $S$ consisting of  1
element of $S_1$, 1 element of  $S_2$ and  1 element of  $S_3,$
and
$$S/(\bar x_3,\bar y_3,\bar z_3):S_{++}^{\infty} \cong
B/(I,x_3,y_3,z_3):B_{++}^{\infty} = B/(x_1,y_1,z_1,x_3,y_3,z_3)
\cong k[x_2,y_2,z_2].$$ By \cite[Remark 2.4]{Vi6}, we have
$$\bigg[\dfrac{(\bar x_3,\bar y_3,\bar z_3):S_{++}^{\infty}}{(\bar
x_3,\bar y_3,\bar z_3)}\bigg]_{(n_1,n_2,n_3)} = 0$$ for all
$n_1,n_2,n_3 \gg 0.$ Therefore, the  mixed multiplicities of
$S/[(\bar x_3,\bar y_3,\bar z_3):S_{++}^{\infty}]$ and the  mixed
multiplicities of $S/(\bar x_3,\bar y_3,\bar z_3)$ are the same.
Moreover, note that by Corollary \ref{co2.100}, we get
$e(S;1,1,1)= e(S/(\bar x_3,\bar y_3,\bar z_3);0,0,0).$ So we
obtain
$$e(S;1,1,1) =
e(S/[(\bar x_3,\bar y_3,\bar z_3):S_{++}^{\infty}];0,0,0)=
e(k[x_2,y_2,z_2];0,0,0).$$ Now, since $\deg x_2=(1,0,0),\deg
y_2=(0,1,0), \deg z_2=(0,0,1)$ in the standard
$\mathbb{N}^3$-graded algebra $k[x_2,y_2,z_2]$ over $k,$ it
follows that $e(k[x_2,y_2,z_2];0,0,0)= 1.$
  Thus,
$e(S;1,1,1)= 1.$ Consequently, it can be verified that  all mixed
multiplicities of maximal degrees of this finitely generated
standard $\mathbb{N}^3$-graded algebra $S$ are indicated.
 \end{example}

 The following useful result proves that
mixed multiplicity of the type $\mathrm{\bf k};$
 the Euler-Poincare characteristic and the mixed multiplicity symbol of any
  mixed  multiplicity system of the type $\mathrm{\bf k}$
   are the same.

 \vskip 0.2cm
\begin{theorem}\label{th2.00} The mixed multiplicity of
 maximal degree of $M$
of the type $\mathrm{\bf k}$ is defined if and only if there
exists a mixed  multiplicity system ${\bf x}$ of $M$ of the type
$\mathrm{\bf k}.$  In this case,  we have $$\chi({\bf x}, M) =
\widetilde{e}({\bf x}, M) = e(M;\mathrm{\bf k}).$$
\end{theorem}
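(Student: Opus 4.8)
The plan is to derive both halves of the statement directly from results already in hand, so that the theorem functions mainly as a convenient repackaging. The ``if and only if'' is nothing but the equivalence of conditions (ii) and (iv) in Proposition \ref{pro2.000}, so I would simply invoke it; for orientation, the two implications run as follows. If $e(M;\mathbf{k})$ is defined, then $e(M;\mathbf{h})=0$ for all $\mathbf{h}>\mathbf{k}$, so $\triangle^{\mathbf{k}}P_M(\mathbf{n})$ is a constant by Proposition \ref{thm2.1}(ii); choosing any $S_{++}$-filter-regular sequence $\mathbf{x}$ of type $\mathbf{k}$ (Remark \ref{re2.2}(i)), Theorem \ref{th2.11} gives $e(M;\mathbf{k})=\ell_A[(M/\mathbf{x}M)_{\mathbf{n}}]$ for $\mathbf{n}\gg\mathbf{0}$, hence $P_{M/\mathbf{x}M}(\mathbf{n})$ is constant and $\dim\mathrm{Supp}_{++}(M/\mathbf{x}M)\le 0$ by Remark \ref{re2.0}, i.e. $\mathbf{x}$ is a mixed multiplicity system of $M$ of type $\mathbf{k}$. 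Conversely, given a mixed multiplicity system $\mathbf{x}$ of $M$ of type $\mathbf{k}$, \cite[Theorem 3.7]{VT3} shows that $\triangle^{\mathbf{k}}P_M(\mathbf{n})$ is a constant, whence $\triangle^{\mathbf{h}}P_M(\mathbf{n})=0$ and so $e(M;\mathbf{h})=0$ for every $\mathbf{h}>\mathbf{k}$, which is precisely the assertion that $e(M;\mathbf{k})$ is defined.

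For the chain of equalities, fix any mixed multiplicity system $\mathbf{x}$ of $M$ of type $\mathbf{k}$. The crucial input is \cite[Theorem 3.7]{VT3}, which asserts that
$$\chi(\mathbf{x},M)=\widetilde{e}(\mathbf{x},M)=\triangle^{\mathbf{k}}P_M(\mathbf{n})$$
for every such $\mathbf{x}$; the common value $\triangle^{\mathbf{k}}P_M(\mathbf{n})$ does not involve $\mathbf{x}$ at all, which is exactly why $\chi(\mathbf{x},M)$ and $\widetilde{e}(\mathbf{x},M)$ come out independent of the chosen system. Since $e(M;\mathbf{k})$ is defined (by the equivalence just used), Proposition \ref{thm2.1}(ii) identifies $\triangle^{\mathbf{k}}P_M(\mathbf{n})$ with $e(M;\mathbf{k})$. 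Substituting this into the display and reading off the two ends yields $\chi(\mathbf{x},M)=\widetilde{e}(\mathbf{x},M)=e(M;\mathbf{k})$, as required.

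Thus within the present paper there is no genuine obstacle: the theorem is assembled from Proposition \ref{pro2.000}, \cite[Theorem 3.7]{VT3} and Proposition \ref{thm2.1}(ii). The substantive work is hidden inside \cite[Theorem 3.7]{VT3}; were it needed here, I would prove it by induction on $n=|\mathbf{k}|$, the length of $\mathbf{x}$. The case $n=0$ is immediate, all three quantities being $\ell_A[M_{\mathbf{n}}]=P_M(\mathbf{n})$ for $\mathbf{n}\gg\mathbf{0}$. For the inductive step one writes $\mathbf{x}=x_1,\mathbf{x}'$ with $x_1\in S_i$ and checks that all three quantities obey the same recursion expressing them in terms of the corresponding invariants of $M/x_1M$ and $0_M:x_1$ relative to $\mathbf{x}'$, the relevant type being $\mathbf{k}-\mathbf{e}_i$: for $\widetilde{e}$ this is the definition; for $\triangle^{\mathbf{k}}P_M$ it follows from the exact sequence linking $M$, $M/x_1M$ and $0_M:x_1$ through multiplication by $x_1$ after applying $\triangle^{\mathbf{e}_i}$; and for $\chi$ it follows from the long exact Koszul homology sequence attached to the mapping cone of multiplication by $x_1$ on $K_\bullet(\mathbf{x}',M)$, together with additivity of the graded Euler characteristic on the finite-length graded pieces for $\mathbf{n}\gg\mathbf{0}$. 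The only delicate point, and the step I would spend most care on, is the bookkeeping showing that $\mathbf{x}'$ is genuinely a mixed multiplicity system of type $\mathbf{k}-\mathbf{e}_i$ for both $M/x_1M$ and $0_M:x_1$, so that the inductive hypothesis applies and every invariant in sight is legitimately defined.
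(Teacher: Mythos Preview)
Your proof is correct and follows essentially the same route as the paper: invoke Proposition \ref{pro2.000} for the equivalence, then combine \cite[Theorem 3.7]{VT3} with Proposition \ref{thm2.1}(ii) to obtain the chain of equalities. Your added elaboration of the two implications in Proposition \ref{pro2.000} and the inductive sketch of \cite[Theorem 3.7]{VT3} go beyond what the paper writes, but the core argument is identical.
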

\begin{proof} The mixed multiplicity of
 maximal degree of $M$
of the type $\mathrm{\bf k}$ is defined if and only if there
exists a mixed  multiplicity system ${\bf x}$ of $M$ of the type
$\mathrm{\bf k}$ by Proposition \ref{pro2.000}.
 In this case,
  $\triangle ^ {\mathrm{\bf
k}}P_M(\mathrm{\bf n}) = e(M;\mathrm{\bf k})$ by Proposition
\ref{thm2.1}. Moreover,   $$\chi({\bf x}, M) = \widetilde{e}({\bf
x}, M) = \triangle ^ {\mathrm{\bf k}}P_M(\mathrm{\bf n}) $$ by
\cite [Theorem 3.7] {VT3}. Hence we obtain $\chi({\bf x}, M) =
\widetilde{e}({\bf x}, M) = e(M;\mathrm{\bf k}).$
\end{proof}
We would like to comment here that Theorem \ref{th2.00}
 not only covers, but
 also is more natural than
\cite[Theorem 3.9 and Theorem 3.10]{VT3}.

 As an
immediate consequence of Theorem \ref{th2.00}, we also obtain  a
more natural result than  \cite [Corollary 3.11(ii)]{VT3} in the
original mixed multiplicity theory.

 \vskip 0.2cm
\begin{corollary}\label{co2.10}
 Let ${\bf x} = x_1, \ldots, x_s$ be  a  mixed  multiplicity system  of $M$ of the type
 $\mathrm{\bf k}.$
  Denote by $\mathrm{\bf h}_i= ({h_{i}}_1,\ldots,{h_{i}}_d)$ the type of a subsequence
  $ x_1,\ldots,x_i$ of ${\bf x}$ for each $1\leq i \leq s.$
  Then for all
large $\mathrm{\bf n},$ we have
  $$e(M;\mathrm{\bf k}) =
\ell_A\big[\big({M}/{{\bf x}M}\big)_{\mathrm{\bf n}}\big]-
\sum_{i=1}^se\bigg(\frac{(x_1,\ldots,x_{i-1})M:
x_i}{(x_1,\ldots,x_{i-1})M}; {\bf k}-{\bf h}_i\bigg).$$
\end{corollary}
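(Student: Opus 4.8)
The plan is to deduce the formula from the main theorem by unwinding the recursive definition of the mixed multiplicity symbol. Since ${\bf x}$ is a mixed multiplicity system of $M$ of type ${\bf k}$, Theorem~\ref{th2.00} gives $e(M;{\bf k})=\widetilde{e}({\bf x},M)$, so it suffices to evaluate $\widetilde{e}({\bf x},M)$. Throughout, write $N_i:=\dfrac{(x_1,\ldots,x_{i-1})M:x_i}{(x_1,\ldots,x_{i-1})M}$, which is precisely $0_{M/(x_1,\ldots,x_{i-1})M}:x_i$.

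First I would establish, by induction on $s$, that for all large ${\bf n}$
$$\widetilde{e}({\bf x},M)=\ell_A\big[(M/{\bf x}M)_{\bf n}\big]-\sum_{i=1}^s\widetilde{e}(x_{i+1},\ldots,x_s;\,N_i).$$
The case $s=0$ is the defining clause $\widetilde{e}(\emptyset,M)=\ell_A[M_{\bf n}]$. For $s>0$, apply one layer of Definition~\ref{de2.2}, namely $\widetilde{e}({\bf x},M)=\widetilde{e}({\bf x}',M/x_1M)-\widetilde{e}({\bf x}',0_M:x_1)$ with ${\bf x}'=x_2,\ldots,x_s$ and $0_M:x_1=N_1$; then feed the inductive hypothesis into the first term, using that the iterated colon modules built along ${\bf x}'$ over $M/x_1M$ are exactly $N_2,\ldots,N_s$ (colons pass through the quotient by $x_1M$) and that the terminal quotient is again $M/{\bf x}M$. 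Collecting the two contributions gives the displayed identity.

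Next, for each $i$ I would replace $\widetilde{e}(x_{i+1},\ldots,x_s;N_i)$ by $e(N_i;{\bf k}-{\bf h}_i)$ via a second application of Theorem~\ref{th2.00}. The sequence $x_{i+1},\ldots,x_s$ has type ${\bf k}-{\bf h}_i$, and it is a mixed multiplicity system of $N_i$: the module $N_i/(x_{i+1},\ldots,x_s)N_i$ is a subquotient of $M$ annihilated by the ideal $(x_1,\ldots,x_s)$, so $\mathrm{Supp}_{++}\big(N_i/(x_{i+1},\ldots,x_s)N_i\big)\subseteq\mathrm{Supp}_{++}(M/{\bf x}M)$, which is at most $0$-dimensional. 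Hence Theorem~\ref{th2.00} applies to $N_i$ and yields $\widetilde{e}(x_{i+1},\ldots,x_s;N_i)=e(N_i;{\bf k}-{\bf h}_i)$. Substituting these into the identity of the previous paragraph gives exactly the claimed formula.

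The step I expect to be the main obstacle is this last verification — that dropping from $M/(x_1,\ldots,x_{i-1})M$ to its colon submodule $N_i$ keeps the tail $x_{i+1},\ldots,x_s$ a mixed multiplicity system, so that $e(N_i;{\bf k}-{\bf h}_i)$ is even defined. This is precisely the hereditary property of mixed multiplicity systems that underlies the well-definedness of $\widetilde{e}$ in Definition~\ref{de2.2}, and the support inclusion above is the quickest way to secure it; the remaining work — unwinding the recursion and matching colon modules and types — is routine bookkeeping.
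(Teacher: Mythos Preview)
Your proof is correct and follows essentially the same route as the paper: unwind the recursion in Definition~\ref{de2.2}(i) to get $\widetilde{e}({\bf x},M)=\ell_A[(M/{\bf x}M)_{\bf n}]-\sum_i\widetilde{e}(x_{i+1},\ldots,x_s;N_i)$, observe that each tail $x_{i+1},\ldots,x_s$ is a mixed multiplicity system of $N_i$, and then apply Theorem~\ref{th2.00} on both sides. The paper simply asserts the recursive identity and the mixed multiplicity system property without further comment, whereas you spell out the induction and the support-inclusion argument; the substance is the same.
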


 \begin{proof}  By Definition \ref{de2.2}(i), it
follows that
$$\widetilde{e}({\bf x},M) = \widetilde{e}\bigg(\emptyset, \frac{M}{{\bf x}M}\bigg)-
\sum_{i=1}^s\widetilde{e}\bigg(x_{i+1},\ldots,x_s,
\frac{(x_1,\ldots,x_{i-1})M: x_i}{(x_1,\ldots,x_{i-1})M}\bigg)$$
and  $ \widetilde{e}\big(\emptyset, \frac{M}{{\bf
 x}M}\big)= \ell_A\big[\big(\frac{M}{{\bf x}M}\big)_{\mathrm{\bf
 n}}\big]$ for all
large $\mathrm{\bf n}.$
 Note that  $x_{i+1},\ldots,x_s$ is a
mixed multiplicity system   of $\frac{(x_1,\ldots,x_{i-1})M:
x_i}{(x_1,\ldots,x_{i-1})M}$ of the type  $\mathrm{\bf k}-
 \mathrm{\bf h}_i$. Hence by Theorem \ref{th2.00}, we get $e(M;\mathrm{\bf k}) =
\ell_A\big[\big({M}/{{\bf x}M}\big)_{\mathrm{\bf n}}\big]-
\sum_{i=1}^se\bigg(\frac{(x_1,\ldots,x_{i-1})M:
x_i}{(x_1,\ldots,x_{i-1})M}; {\bf k}-{\bf h}_i\bigg)$ for all
large $\mathrm{\bf n}.$
\end{proof}
 By Theorem \ref{th2.11} and Corollary \ref{co2.10}, we immediately obtain the following.

\vskip 0.2cm
\begin{corollary} \label{pro2.000f}  Let ${\bf x}$ be  a  mixed  multiplicity system  of
$M$ of the type
 $\mathrm{\bf k}.$ Then we
have $e(M;\mathrm{\bf k}) \le \ell_A\big[\big({M}/{{\bf
x}M}\big)_{\mathrm{\bf n}}\big]$ for all large $\mathrm{\bf n},$
and equality holds  if ${\bf x}$ is an $S_{++}$-filter-regular
sequence.
\end{corollary}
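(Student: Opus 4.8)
The plan is to obtain both assertions directly from Corollary \ref{co2.10}, using only the non-negativity of mixed multiplicities of maximal degrees (Proposition \ref{thm2.1}(i)) and the formula of Theorem \ref{th2.11}. Nothing new needs to be proved; the work is entirely bookkeeping.

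First I would apply Corollary \ref{co2.10} to the given mixed multiplicity system ${\bf x} = x_1,\ldots,x_s$ of type $\mathrm{\bf k}$. Writing $\mathrm{\bf h}_i$ for the type of the subsequence $x_1,\ldots,x_i$, this yields, for all large $\mathrm{\bf n}$,
$$e(M;\mathrm{\bf k}) = \ell_A\big[\big(M/{\bf x}M\big)_{\mathrm{\bf n}}\big] - \sum_{i=1}^s e\bigg(\frac{(x_1,\ldots,x_{i-1})M:x_i}{(x_1,\ldots,x_{i-1})M};\, \mathrm{\bf k}-\mathrm{\bf h}_i\bigg).$$
As recorded in the proof of Corollary \ref{co2.10}, each tail $x_{i+1},\ldots,x_s$ is a mixed multiplicity system of the module $\frac{(x_1,\ldots,x_{i-1})M:x_i}{(x_1,\ldots,x_{i-1})M}$ of type $\mathrm{\bf k}-\mathrm{\bf h}_i$; hence by Theorem \ref{th2.00} (equivalently Proposition \ref{pro2.000}) each correction term $e\big(\frac{(x_1,\ldots,x_{i-1})M:x_i}{(x_1,\ldots,x_{i-1})M};\, \mathrm{\bf k}-\mathrm{\bf h}_i\big)$ is a genuine mixed multiplicity of maximal degree in the sense of Definition \ref{de2.0}, and so by Proposition \ref{thm2.1}(i) it is a non-negative integer. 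Therefore the subtracted sum is $\ge 0$, which gives $e(M;\mathrm{\bf k}) \le \ell_A\big[\big(M/{\bf x}M\big)_{\mathrm{\bf n}}\big]$ for all large $\mathrm{\bf n}$.

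For the equality clause I would simply invoke Theorem \ref{th2.11}. If ${\bf x}$ is moreover an $S_{++}$-filter-regular sequence (of type $\mathrm{\bf k}$), then $e(M;\mathrm{\bf k})$ is defined — ${\bf x}$ being a mixed multiplicity system, Proposition \ref{pro2.000} applies — and Theorem \ref{th2.11} gives $e(M;\mathrm{\bf k}) = \ell_A\big[\big(M/{\bf x}M\big)_{\mathrm{\bf n}}\big]$ for all large $\mathrm{\bf n}$, i.e. equality in the displayed inequality. One can equally see this from the formula above: for a filter-regular sequence $(x_1,\ldots,x_{i-1})M:x_i$ agrees with $(x_1,\ldots,x_{i-1})M$ in all large degrees, so every correction term vanishes. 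There is essentially no obstacle here; the single point deserving a word is that the summands in Corollary \ref{co2.10} really are mixed multiplicities to which Proposition \ref{thm2.1}(i) applies, and this is exactly what the proof of Corollary \ref{co2.10} already establishes.
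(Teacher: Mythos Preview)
Your proof is correct and matches the paper's approach exactly: the paper simply states that the corollary follows immediately from Theorem \ref{th2.11} and Corollary \ref{co2.10}, and your argument is precisely the expected unpacking of that one-line justification, using Proposition \ref{thm2.1}(i) for the non-negativity of the correction terms.
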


Combining Remark \ref{re2.2} and Theorem \ref{th2.11}  with
Corollary \ref{co2.10}, we get:
 \vskip 0.2cm
\begin{corollary}\label{co2.10d} Let $e(M;{\bf k})$ be
the  mixed multiplicity of $M$ of the type ${\bf k}.$ Then the
following are equivalent:
\begin{itemize}
\item[$\mathrm{(i)}$] $e(M;{\bf k}) > 0.$
 \item[$\mathrm{(ii)}$]  $\dim\mathrm{Supp}_{++}\big({M}/{{\bf x}M}\big) =
 0$ for any
  mixed  multiplicity system
 ${\bf x}$
 of the type $\mathrm{\bf k}.$
   \item[$\mathrm{(iii)}$] $\dim\mathrm{Supp}_{++}\big({M}/{{\bf x}M}\big) = 0$
for any $S_{++}$-filter-regular sequence ${\bf x}$ of the type
${\bf k}.$
 \item[$\mathrm{(iv)}$] There exists  an $S_{++}$-filter-regular sequence
 ${\bf x}$
 of $M$ of the type $\mathrm{\bf k}$ such that
 $$\dim\mathrm{Supp}_{++}\big({M}/{{\bf x}M}\big) = 0.$$
  \end{itemize}
\end{corollary}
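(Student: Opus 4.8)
The plan is to prove the cycle of implications (i)$\Rightarrow$(ii)$\Rightarrow$(iii)$\Rightarrow$(iv)$\Rightarrow$(i), drawing on the machinery already assembled. The key point feeding everything is that, by Theorem \ref{th2.00}, whenever ${\bf x}$ is a mixed multiplicity system of $M$ of the type ${\bf k}$ we have $\widetilde{e}({\bf x},M) = e(M;{\bf k})$, so the value $e(M;{\bf k})$ can be read off from \emph{any} such system, not merely from filter-regular ones.

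For (i)$\Rightarrow$(ii): assume $e(M;{\bf k}) > 0$ and let ${\bf x} = x_1,\ldots,x_s$ be an arbitrary mixed multiplicity system of $M$ of the type ${\bf k}$. By definition of a mixed multiplicity system, $\dim\mathrm{Supp}_{++}(M/{\bf x}M) \le 0$; I must rule out the value $-\infty$. Apply Corollary \ref{co2.10}: for all large $\mathrm{\bf n}$,
$$
e(M;{\bf k}) = \ell_A\big[\big(M/{\bf x}M\big)_{\mathrm{\bf n}}\big] - \sum_{i=1}^s e\bigg(\frac{(x_1,\ldots,x_{i-1})M:x_i}{(x_1,\ldots,x_{i-1})M};{\bf k}-{\bf h}_i\bigg).
$$
Each mixed multiplicity $e(\,\cdot\,;{\bf k}-{\bf h}_i)$ appearing on the right is well-defined (the subquotient is annihilated appropriately, as in Remark \ref{re2.2a}) and hence non-negative by Proposition \ref{thm2.1}(i). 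Therefore $\ell_A[(M/{\bf x}M)_{\mathrm{\bf n}}] \ge e(M;{\bf k}) > 0$ for all large $\mathrm{\bf n}$, which forces $P_{M/{\bf x}M}(\mathrm{\bf n})$ to be a nonzero polynomial, so its degree is $\ge 0$; combined with $\dim\mathrm{Supp}_{++}(M/{\bf x}M) \le 0$ and Remark \ref{re2.0}, this gives $\dim\mathrm{Supp}_{++}(M/{\bf x}M) = 0$.

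The implication (ii)$\Rightarrow$(iii) is immediate once we observe, as recorded in Remark \ref{re2.2a}, that every $S_{++}$-filter-regular sequence of the type ${\bf k}$ is in particular a mixed multiplicity system of the type ${\bf k}$; and (iii)$\Rightarrow$(iv) is trivial, since an $S_{++}$-filter-regular sequence of the type ${\bf k}$ exists by Remark \ref{re2.2}(i). Finally, for (iv)$\Rightarrow$(i): given such an ${\bf x}$ with $\dim\mathrm{Supp}_{++}(M/{\bf x}M) = 0$, Theorem \ref{th2.11} gives $e(M;{\bf k}) = \ell_A[(M/{\bf x}M)_{\mathrm{\bf n}}]$ for all large $\mathrm{\bf n}$, and the equality $\dim\mathrm{Supp}_{++}(M/{\bf x}M) = \deg P_{M/{\bf x}M}(\mathrm{\bf n}) = 0$ means this polynomial is a positive constant, whence $e(M;{\bf k}) > 0$. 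I expect the only genuine subtlety to be the step (i)$\Rightarrow$(ii), specifically making sure that the mixed multiplicities in the correction sum of Corollary \ref{co2.10} are legitimately defined so that Proposition \ref{thm2.1}(i) applies — this is exactly what Remark \ref{re2.2a} secures, so with that in hand the argument is routine.
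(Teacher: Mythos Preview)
Your proof is correct and follows essentially the same cycle of implications as the paper. The only cosmetic difference is in (i)$\Rightarrow$(ii): the paper invokes Corollary~\ref{pro2.000f} directly to get $\ell_A[(M/{\bf x}M)_{\mathrm{\bf n}}] \ge e(M;{\bf k}) > 0$, whereas you unpack that inequality inline via Corollary~\ref{co2.10} and Proposition~\ref{thm2.1}(i) --- which is exactly how Corollary~\ref{pro2.000f} is obtained anyway. One small citation issue: the well-definedness of the correction terms $e\big(\frac{(x_1,\ldots,x_{i-1})M:x_i}{(x_1,\ldots,x_{i-1})M};{\bf k}-{\bf h}_i\big)$ is not quite what Remark~\ref{re2.2a} addresses (that remark concerns the quotients $M/(x_1,\ldots,x_i)M$, not the colon subquotients); the fact you need is stated inside the proof of Corollary~\ref{co2.10} itself, where it is noted that $x_{i+1},\ldots,x_s$ is a mixed multiplicity system of that subquotient.
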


 \begin{proof} (i)$\Rightarrow$ (ii): Let ${\bf x}$ be a
  mixed  multiplicity system
  of $M$ of the type $\mathrm{\bf k}.$ Then by Corollary \ref{pro2.000f}, we
  have $\ell_A\big[\big({M}/{{\bf x}M}\big)_{\mathrm{\bf
  n}}\big]> 0$ for all large $\mathrm{\bf n}$ since $e(M;{\bf k}) > 0.$
So $\deg P_{M/{\bf x}M}(\mathrm{\bf n})= 0.$  By Remark
\ref{re2.0}, we get $$\dim\mathrm{Supp}_{++}\big({M}/{{\bf
x}M}\big)= \deg P_{M/{\bf x}M}(\mathrm{\bf n}).$$  Hence
$\dim\mathrm{Supp}_{++}\big({M}/{{\bf x}M}\big) = 0.$

\noindent (ii)$\Rightarrow$ (iii): By Remark \ref{re2.2a}, any
$S_{++}$-filter-regular sequence ${\bf x}$ of the type
 ${\bf k}$ of $M$ is a mixed
multiplicity system of the type
 ${\bf k}$ of $M.$ Hence by (ii), we obtain $\dim\mathrm{Supp}_{++}\big({M}/{{\bf x}M}\big) = 0$
for any $S_{++}$-filter-regular sequence ${\bf x}$ of the type
${\bf k}.$

\noindent(iii)$\Rightarrow$ (iv): By Remark \ref{re2.2}(i), there
exists an $S_{++}$-filter-regular sequence ${\bf x}$ of the type
 ${\bf k}$ of $M.$ And by (iii), $\dim\mathrm{Supp}_{++}\big({M}/{{\bf x}M}\big) = 0.$
Hence there exists  an $S_{++}$-filter-regular sequence  ${\bf x}$
of the type
 ${\bf k}$ of $M$ such that $$\dim\mathrm{Supp}_{++}\big({M}/{{\bf x}M}\big) = 0.$$
\noindent(iv)$\Rightarrow$ (i): Assume that there exists  an
$S_{++}$-filter-regular sequence
 ${\bf x}$
 of $M$ of the type $\mathrm{\bf k}$ such that
 $$\dim\mathrm{Supp}_{++}\big({M}/{{\bf x}M}\big) = 0.$$ Then by
Theorem \ref{th2.11}, we get $e(M;{\bf k}) > 0.$
\end{proof}

 We obtain the following result which shows that  mixed
multiplicities of
 maximal degrees are additive on short exact sequences.

 \vskip 0.2cm
\begin{corollary}\label{co2.10a}
  Let $0\longrightarrow M' \longrightarrow M\longrightarrow M"\longrightarrow
  0$ be a short exact sequence of $\mathbb{N}^d$-graded $S$-modules.
   Then the following statements hold.
\begin{itemize}
\item[$\mathrm{(i)}$] $e(M;\mathrm{\bf k})$ is defined if and only
if both $e(M'; \mathrm{\bf k})$ and $e(M"; \mathrm{\bf k})$ are
defined.

\item[$\mathrm{(ii)}$] Assume that $e(M;\mathrm{\bf k})$ is
defined. Then $e(M;\mathrm{\bf k})= e(M';\mathrm{\bf
k})+e(M";\mathrm{\bf k}),$ i.e., the mixed multiplicities are
additive on short exact sequences.
\end{itemize}
\end{corollary}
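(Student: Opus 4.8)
The plan is to reduce everything to the already-established fact (Proposition \ref{pro2.000}) that $e(N;\mathrm{\bf k})$ is defined precisely when $\triangle^{\mathrm{\bf k}}P_N(\mathrm{\bf n})$ is a constant, together with the observation that $P$ is additive on short exact sequences. Concretely, the short exact sequence $0\to M'\to M\to M''\to 0$ of $\mathbb{N}^d$-graded $S$-modules gives, for each $\mathrm{\bf n}$, a short exact sequence of $A$-modules $0\to M'_{\mathrm{\bf n}}\to M_{\mathrm{\bf n}}\to M''_{\mathrm{\bf n}}\to 0$, whence $\ell_A[M_{\mathrm{\bf n}}] = \ell_A[M'_{\mathrm{\bf n}}] + \ell_A[M''_{\mathrm{\bf n}}]$ for all $\mathrm{\bf n}$, and therefore, passing to Hilbert polynomials, $P_M(\mathrm{\bf n}) = P_{M'}(\mathrm{\bf n}) + P_{M''}(\mathrm{\bf n})$. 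Since the difference operator $\triangle^{\mathrm{\bf k}}$ is linear, we get the key identity
$$\triangle^{\mathrm{\bf k}}P_M(\mathrm{\bf n}) = \triangle^{\mathrm{\bf k}}P_{M'}(\mathrm{\bf n}) + \triangle^{\mathrm{\bf k}}P_{M''}(\mathrm{\bf n}).$$

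For part (i): if $e(M';\mathrm{\bf k})$ and $e(M'';\mathrm{\bf k})$ are both defined, then by Proposition \ref{pro2.000} (equivalence of (iii) and (iv)) both $\triangle^{\mathrm{\bf k}}P_{M'}$ and $\triangle^{\mathrm{\bf k}}P_{M''}$ are constants, hence so is their sum $\triangle^{\mathrm{\bf k}}P_M$, and so $e(M;\mathrm{\bf k})$ is defined. The converse direction is where a little care is needed: from $\triangle^{\mathrm{\bf k}}P_M$ constant I must deduce that $\triangle^{\mathrm{\bf k}}P_{M'}$ and $\triangle^{\mathrm{\bf k}}P_{M''}$ are \emph{separately} constant — the sum being constant does not a priori force each summand to be so. The way around this is positivity: for all large $\mathrm{\bf n}$ one has $\triangle^{\mathrm{\bf h}}P_{M'}(\mathrm{\bf n})\ge 0$ and $\triangle^{\mathrm{\bf h}}P_{M''}(\mathrm{\bf n})\ge 0$ for every $\mathrm{\bf h}$ (these are difference functions of the eventually-nonnegative Hilbert functions $\ell_A[M'_{\mathrm{\bf n}}]$, $\ell_A[M''_{\mathrm{\bf n}}]$, exactly as in the argument preceding Proposition \ref{thm2.1}). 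Now take any $\mathrm{\bf h} > \mathrm{\bf k}$: then $0 \le \triangle^{\mathrm{\bf h}}P_{M'}(\mathrm{\bf n}) + \triangle^{\mathrm{\bf h}}P_{M''}(\mathrm{\bf n}) = \triangle^{\mathrm{\bf h}}P_M(\mathrm{\bf n}) = \triangle^{\mathrm{\bf h}-\mathrm{\bf k}}[\triangle^{\mathrm{\bf k}}P_M(\mathrm{\bf n})] = 0$ since $\triangle^{\mathrm{\bf k}}P_M$ is constant; as the two summands are each $\ge 0$, both vanish. Hence $e(M';\mathrm{\bf h}) = \triangle^{\mathrm{\bf h}}P_{M'} = 0$ and $e(M'';\mathrm{\bf h}) = 0$ for all $\mathrm{\bf h} > \mathrm{\bf k}$, so by Definition \ref{de2.0} both $e(M';\mathrm{\bf k})$ and $e(M'';\mathrm{\bf k})$ are defined. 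This positivity/vanishing trick is the main obstacle — everything else is linearity bookkeeping.

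For part (ii): assuming $e(M;\mathrm{\bf k})$ is defined, part (i) gives that $e(M';\mathrm{\bf k})$ and $e(M'';\mathrm{\bf k})$ are defined too, so by Proposition \ref{thm2.1}(ii) we have $e(M;\mathrm{\bf k}) = \triangle^{\mathrm{\bf k}}P_M$, $e(M';\mathrm{\bf k}) = \triangle^{\mathrm{\bf k}}P_{M'}$ and $e(M'';\mathrm{\bf k}) = \triangle^{\mathrm{\bf k}}P_{M''}$ (each a constant). Substituting into the key identity above immediately yields $e(M;\mathrm{\bf k}) = e(M';\mathrm{\bf k}) + e(M'';\mathrm{\bf k})$, which is the desired additivity. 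Alternatively, one could run part (ii) through Theorem \ref{th2.00} by choosing a common $S_{++}$-filter-regular sequence ${\bf x}$ of type $\mathrm{\bf k}$ for all three modules and using additivity of Euler–Poincaré characteristics of Koszul homology on the short exact sequence, but the Hilbert-polynomial argument is shorter and self-contained.
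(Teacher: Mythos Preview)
Your approach is genuinely different from the paper's. The paper proves (i) by invoking \cite[Lemma~2.7]{VT3}---a sequence of type $\mathbf{k}$ is a mixed multiplicity system for $M$ if and only if it is one for both $M'$ and $M''$---and then applies Proposition~\ref{pro2.000}. For (ii) it picks a mixed multiplicity system $\mathbf{y}$, uses the additivity $\chi(\mathbf{y},M)=\chi(\mathbf{y},M')+\chi(\mathbf{y},M'')$ from \cite[Lemma~3.2(i)]{VT3}, and identifies $\chi$ with $e$ via Theorem~\ref{th2.00}. You instead work directly with Hilbert polynomials and difference operators, which is more elementary and avoids any appeal to \cite{VT3}; your argument for (ii) and for the ``if'' direction of (i) is clean and correct.

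There is, however, a gap in your ``only if'' direction of (i). You claim $\triangle^{\mathbf{h}}P_{M'}(\mathbf{n})\ge 0$ for large $\mathbf{n}$ and every $\mathbf{h}$, citing the argument preceding Proposition~\ref{thm2.1}. But that passage deduces nonnegativity only \emph{after} $\triangle^{\mathbf{k}}P_M$ is already known to be constant---exactly what you are trying to prove for $M'$---so the citation is circular. And the bare implication ``$P\ge 0$ for large $\mathbf{n}$ $\Rightarrow$ $\triangle^{\mathbf{h}}P\ge 0$'' is false for polynomials: for instance $P(n_1,n_2)=n_1^2-n_1n_2+n_2^2\ge 0$ everywhere, yet $\triangle^{(1,1)}P=-1$. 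The repair is already at hand in the paper: by Remark~\ref{re2.2}(i)--(ii), choose an $S_{++}$-filter-regular sequence $\mathbf{y}$ of type $\mathbf{h}$ with respect to $M'$ and write $\triangle^{\mathbf{h}}P_{M'}=P_{M'/\mathbf{y}M'}$, which, being a Hilbert polynomial, is eventually nonnegative. With this patch your positivity-and-vanishing trick goes through, and the resulting proof is a nice self-contained alternative to the paper's route through \cite{VT3}.
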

\begin{proof} Let ${\bf x}$ be a sequence of elements in
$\bigcup _{j=1}^dS_j$  of the type $\mathrm{\bf k}.$   Then ${\bf
x}$ is a mixed  multiplicity system of $M$ if and only if ${\bf
x}$ is a mixed  multiplicity system of both $M'$ and $M"$ by \cite
[Lemma 2.7] {VT3}. Hence $e(M;\mathrm{\bf k})$ is defined if and
only if both $e(M'; \mathrm{\bf k})$ and $e(M"; \mathrm{\bf k})$
are defined by Proposition \ref{pro2.000}. We get (i).  The proof
of (ii): Since $e(M;\mathrm{\bf k})$ is defined, it follows that
there exists  a mixed  multiplicity system
 ${\bf y}$
 of $M$ of the type $\mathrm{\bf k}$ by Proposition \ref{pro2.000}. Then  we have
 $\chi({\bf y},M)= \chi({\bf
y},M')+\chi({\bf y},M")$ by \cite[Lemma 3.2(i)] {VT3}. So
$e(M;\mathrm{\bf k})= e(M';\mathrm{\bf k})+e(M";\mathrm{\bf k})$
by Theorem \ref{th2.00}.
\end{proof}

\section{ Mixed Multiplicities Of Ideals}
In this section,  we will give some applications of Section 2 to
mixed multiplicities of modules over local rings with respect to
ideals.

 \vskip 0.2cm
Let
 $(R, \frak n)$  be  a  Noetherian   local ring with   maximal ideal $\frak{n}$
 and  infinite residue field $ R/\mathfrak{n}.$
 Let $N$ be a finitely generated $R$-module.  Let $J, I_1,\ldots, I_d$ be ideals of $R$
 with $J$ being  $\frak n$-primary.    For any $\mathrm{\bf k}=
 (k_1,\ldots,k_d);
  \mathrm{\bf n}= (n_1,\ldots,n_d)\in \mathbb{N}^d$ and
  $\mathrm{\bf I}= I_1,\ldots,I_d,$ set $\mathrm{\bf I}^{[\mathrm{\bf k}]}= I_1^{[k_1]},
  \ldots,I_d^{[k_d]}$ and
 $\mathbb{I}^{\mathrm{\bf n}}= I_1^{n_1}\cdots I_d^{n_d}.$
  We get an $\mathbb{N}^{(d+1)}$-graded algebra and an $\mathbb{N}^{(d+1)}$-graded module:
$$ T  =\bigoplus_{n \ge 0,\; \mathrm{\bf n}\ge {\bf 0}}
\dfrac{J^n\mathbb{I}^{\mathrm{\bf
n}}}{J^{n+1}\mathbb{I}^{\mathrm{\bf n}}}\; \mathrm{and}\; {\cal N}
=\bigoplus_{n \ge 0,\; \mathrm{\bf n}\ge {\bf 0}}
\dfrac{J^n\mathbb{I}^{\mathrm{\bf
n}}N}{J^{n+1}\mathbb{I}^{\mathrm{\bf n}}N}.$$ Then  $T$ is a
finitely generated standard $\mathbb{N}^{(d+1)}$-graded algebra
over an  Artinian local ring $R/J$ and ${\cal N}$ is a finitely
generated standard $\mathbb{N}^{(d+1)}$-graded $T$-module. The
mixed multiplicity of ${\cal N}$ of the type  $(k_0,\mathrm{\bf
k})$ is denoted by $e\big(J^{[k_0+1]},\mathrm{\bf I}^{[\mathrm{\bf
k}]};N\big)$, i.e., $e\big(J^{[k_0+1]},\mathrm{\bf
I}^{[\mathrm{\bf k}]};N\big) := e({\cal N}; k_0,\mathbf{k})$ and
which is called the {\it  mixed multiplicity of $N$ with respect
to ideals $J,\mathrm{\bf I}$ of the type $(k_0+1,\mathrm{\bf
k}).$} The   mixed multiplicity of $N$ with respect to ideals
$J,\mathrm{\bf I}$ of the type $(k_0+1,\mathrm{\bf k})$ with $k_0
+\mid \mathrm{\bf k}\mid = \dim \dfrac{N}{0_N: I^\infty}-1$ (see
e.g. \cite{HHRT, MV, Ve}) is called the {\it original  mixed
multiplicity of $N$ with respect to ideals $J,\mathrm{\bf I}$ of
the type $(k_0+1,\mathrm{\bf k}).$ }
  Set $I=JI_1\cdots I_d;$ $I_0 = J$ and $T_i = I_i/JI_i$ for all
  $0 \le i \le d.$

 \vskip 0.2cm
\begin{remark}\label{de4.0}  Assign $\dim \dfrac{N}{0_N: I^\infty}= -\infty$ to the case that
$\dfrac{N}{0_N: I^\infty}= 0.$ Then
 we always have $ \dim
\mathrm{Supp}_{++}{\cal N} = \dim \dfrac{N}{0_N: I^\infty}-1$ by
\cite [Remark 4.1]{VT3}.
\end{remark}

 \vskip 0.2cm
\begin{definition}[{\cite [Definition 4.2] {VT3}}]\label{de4.1}  An element
$a \in R$ is called a {\it  Rees superficial element}
 of $N$ with respect to $\mathrm{\bf I}$ if there exists $i \in \{1, \ldots, d\}$
 such that $a \in I_i$ and
 $$aN \bigcap \mathbb{I}^{\mathrm{\bf n}}I_iN = a\mathbb{I}^{\mathrm{\bf n}}N$$ for all
 $ \mathrm{\bf n}\gg \bf 0.$
A sequence $x_1, \ldots, x_t$   in $R$ is called a {\it Rees
superficial sequence}  of $N$ with respect to $\mathrm{\bf I}$  if
$x_{j + 1}$ is a Rees superficial element  of $N/(x_1, \ldots,
x_{j})N$  with respect to $\mathrm{\bf I}$ for all $j = 0, 1,
\ldots, t - 1.$ A Rees superficial sequence of $N$  consisting of
$k_1$ elements of $I_1,\ldots,k_d$ elements of $I_d$ is called a
Rees superficial sequence of $N$ of the type  $\mathrm{\bf k}=
(k_1 ,\ldots,k_d).$
\end{definition}

 \vskip 0.2cm
\begin{definition}[{\cite [Definition 4.4] {VT3}}]\label{de4.2} Let ${\bf x}$ be a Rees
superficial sequence of $N$ with respect to ideals $J,\mathrm{\bf
I}$ of the type $(k_0,\mathrm{\bf k}) \in \mathbb{N}^{d+1}.$ Then
${\bf x}$ is called a {\it mixed  multiplicity system of $N$ with
respect to ideals $J,\mathrm{\bf I}$ of the type $(k_0,\mathrm{\bf
k})$} if $\dim \dfrac{N}{{\bf x}N: I^\infty}\le 1.$
\end{definition}

\vskip 0.2cm \noindent
\begin{remark}\label{re4.6} Recall that  $e({\cal N}; k_0,\mathrm{\bf k}) =
e(J^{[k_0+1]}, \mathrm{\bf I}^{[\mathrm{\bf k}]}; N ).$ Then we
have:

\begin{itemize}
      \item[$\mathrm{(i)}$] Let $a \in I_i$ be a Rees superficial element of $N$ with respect
to $J, \mathrm{\bf I},$  $a^*$ the image of $a$ in $T_i,$
$e(0_{\cal N}:a^*; h_0,{\bf h});$ $e({\cal N}/a^*{\cal N};
h_0,{\bf h})$ be defined. By \cite [(4.1)] {VT3}, $\big({\cal N}/{
a}^*{\cal N}\big)_{(m, \;\mathrm{\bf m})}\cong \bigg[\bigoplus_{n
\ge 0,\; \mathrm{\bf n}\ge {\bf
0}}\dfrac{J^n\mathbb{I}^{\mathrm{\bf
n}}(N/{a}N)}{J^{n+1}\mathbb{I}^{\mathrm{\bf
n}}(N/{a}N)}\bigg]_{(m, \;\mathrm{\bf m})}$ for  $m \gg 0;\;
\mathrm{\bf m}\gg {\bf 0}$. Hence $e({\cal N}/a^*{\cal N};
h_0,{\bf h})= e(J^{[h_0+1]},\mathrm{\bf I}^{[\mathrm{\bf h}]};
N/aN).$

Recall that there exists $u \gg 0$ such that $(0_{\cal N}:
a^*)_{(n+u,\; {\bf n}+u{\bf 1})} =
\dfrac{WJ^{n}\mathbb{I}^\mathrm{\bf n}}{W
J^{n+1}\mathbb{I}^\mathrm{\bf n}}$ for all  $ n \ge
0;\;\mathrm{\bf n} \ge {\bf 0}$ by \cite [(4.3)] {VT3}, here $W =
(0_N : a)\bigcap I^uN.$ Therefore,  we get  $e(0_{\cal N}:a^*;
h_0,{\bf h})= e(J^{[h_0+1]},\mathrm{\bf I}^{[\mathrm{\bf h}]};
W).$  Let $Y$ be a submodule of $N.$ We  put $ \frak R
=\bigoplus_{n \ge 0,\; \mathrm{\bf n}\ge {\bf 0}}
{J^n\mathbb{I}^{\mathrm{\bf n}}};$  $ {\cal Y} =\bigoplus_{n \ge
0,\; \mathrm{\bf n}\ge {\bf 0}} {J^n\mathbb{I}^{\mathrm{\bf n}}Y}$
and $\cal X= \cal Y:\frak R_{++}^{\infty}.$  Then by \cite[Lemma
2.3]{Vi6}, there exists $v\gg 0$ such that ${\cal
X}_{(n+v,\mathrm{\bf n}+v \mathrm{\bf1})} = {\frak R}_{(n,\;
\mathrm{\bf n})}{\cal X}_{(v,\; v \mathrm{\bf1})}$ for all $ n \ge
0;\;\mathrm{\bf n} \ge {\bf 0}$. Note that for all $n \gg 0;\;
\mathrm{\bf n}\gg {\bf 0},$ ${\frak R}_{(n,\; \mathrm{\bf
n})}{\cal X}_{(v,\; v \mathrm{\bf1})} \subset {\cal
Y}_{(n+v,\mathrm{\bf n}+v \mathrm{\bf1})}.$ Consequently  ${\cal
X}_{(n+v,\mathrm{\bf n}+v \mathrm{\bf1})} = {\cal
Y}_{(n+v,\mathrm{\bf n}+v \mathrm{\bf1})}$ for all $n \gg 0;\;
\mathrm{\bf n}\gg {\bf 0}.$ From this it follows that
$(Y:I^\infty)J^{n}\mathbb{I}^\mathrm{\bf n} =
YJ^{n}\mathbb{I}^\mathrm{\bf n}$ for all $n \gg 0;\; \mathrm{\bf
n}\gg {\bf 0}.$ Therefore, we have  $e(J^{[h_0+1]},\mathrm{\bf
I}^{[\mathrm{\bf h}]}; Y) = e(J^{[h_0+1]},\mathrm{\bf
I}^{[\mathrm{\bf h}]}; Y:I^\infty).$ Next set $U = 0_N : a.$ Then
since $U:I^\infty = W:I^\infty, $ we obtain
$e(J^{[h_0+1]},\mathrm{\bf I}^{[\mathrm{\bf h}]}; U) =
e(J^{[h_0+1]},\mathrm{\bf I}^{[\mathrm{\bf h}]}; W).$ Thus
 $e(0_{\cal N}:a^*; h_0,{\bf h})=
e(J^{[h_0+1]},\mathrm{\bf I}^{[\mathrm{\bf h}]}; 0_N:a).$
 \item[$\mathrm{(ii)}$]
  We
always have  $\dim \frac{N}{0_N:I^{\infty}} \ne 0$ since
$(0_N:I^{\infty}): I=0_N:I^{\infty}.$ \item[$\mathrm{(iii)}$]
 If $\dim \dfrac{N}{0_N: I^\infty} =
1,$ then $e(J^{[1]},\mathrm{\bf I}^{[\mathrm{\bf 0}]}; N) = e(J
;\frac{N}{0_N: I^{\infty}})$ by \cite[Proposition 3.2]{Vi}.
Moreover,  if $\dim \dfrac{N}{0_N: I^\infty} < 0,$ then
$\dfrac{N}{0_N: I^\infty} = 0.$ In this case,
 $e(J
;\frac{N}{0_N: I^{\infty}}) = 0,$ and \; $e(J^{[1]},\mathrm{\bf
I}^{[\mathrm{\bf 0}]}; N) = 0$ \; since $ \dim
\mathrm{Supp}_{++}{\cal N}<0$ by Remark \ref{de4.0}. Thus, if
$\dim \dfrac{N}{0_N: I^\infty} \le 1,$ then $e(J^{[1]},\mathrm{\bf
I}^{[\mathrm{\bf 0}]}; N) = e(J ;\frac{N}{0_N: I^{\infty}})$ by
(ii). \item[$\mathrm{(iv)}$] Let ${\bf x}$ be a Rees superficial
sequence of $N$  with respect to $J,\mathrm{\bf I}$ of the type
$(k_0, {\bf k})$ and let ${\bf x}^*$ be the image of ${\bf x}$
 in $\bigcup_{i = 0}^dT_i.$ Then ${\bf x}$ is a  mixed  multiplicity system of $N$ with
respect to
 $J,\mathrm{\bf I}$ of the type $(k_0, {\bf k})$ if and only if
 ${\bf x}^*$ is a  mixed  multiplicity system of $\cal N$
 of the type $(k_0, {\bf k})$ by \cite[Remark 4.5]{VT3}.
\end{itemize}
\end{remark}

 By Corollary \ref{co2.10} and Remark \ref{re4.6}, we obtain
the following result.
 \vskip 0.2cm
\begin{theorem}\label{th4.11}   Let ${\bf x} = x_1, \ldots, x_s$ be
a  mixed  multiplicity system of $N$ with respect to ideals
$J,\mathrm{\bf I}$ of the type $(k_0,\mathrm{\bf k}).$ Denote by
$(m_i,\mathrm{\bf h}_i)= (m_i, {h_{i}}_1,\ldots,{h_{i}}_d)$ the
type of a subsequence $ x_1,\ldots,x_i$ of ${\bf x}$ for each
$1\leq i \leq s.$ For $1\leq i \leq s,$ set $N_i =
\frac{(x_1,\ldots,x_{i-1}){N}: x_i}{(x_1,\ldots,x_{i-1}){N}}.$
Then we have $$e(J^{[k_0+1]},\mathrm{\bf I}^{[\mathrm{\bf k}]}; N)
= e\big(J; \dfrac{N}{{\bf x}N:I^\infty}\big) -
\sum_{i=1}^se\big(J^{[k_0-m_i+1]},\mathrm{\bf I}^{[\mathrm{\bf
k}-\mathrm{\bf h}_i]}; N_i\big).$$
\end{theorem}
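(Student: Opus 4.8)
The plan is to pass to the associated multigraded algebra $T$ and the module ${\cal N}$, apply Corollary \ref{co2.10} there, and then translate each term back to the ideal side using Remark \ref{re4.6}. Write $x_i^*$ for the image of $x_i$ in the appropriate component $T_j$ ($0\le j\le d$) and set ${\bf x}^*=x_1^*,\ldots,x_s^*$. Since ${\bf x}$ is a mixed multiplicity system of $N$ with respect to $J,\mathrm{\bf I}$ of the type $(k_0,\mathrm{\bf k})\in\mathbb{N}^{d+1}$, Remark \ref{re4.6}(iv) shows that ${\bf x}^*$ is a mixed multiplicity system of ${\cal N}$ of the type $(k_0,\mathrm{\bf k})$; hence $e({\cal N};k_0,\mathrm{\bf k})=e(J^{[k_0+1]},\mathrm{\bf I}^{[\mathrm{\bf k}]};N)$ is defined by Proposition \ref{pro2.000}, and the subsequence $x_1^*,\ldots,x_i^*$ has type $(m_i,\mathrm{\bf h}_i)$ in the $\mathbb{N}^{d+1}$-grading. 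Applying Corollary \ref{co2.10} to $M={\cal N}$ with the sequence ${\bf x}^*$ (so that $(k_0,\mathrm{\bf k})$ plays the role of ``$\mathrm{\bf k}$'' and $(m_i,\mathrm{\bf h}_i)$ that of ``$\mathrm{\bf h}_i$'') gives, for all large degrees,
$$e({\cal N};k_0,\mathrm{\bf k})=\ell_{R/J}\bigg[\Big({\cal N}/{\bf x}^*{\cal N}\Big)_{(n,\,\mathrm{\bf n})}\bigg]-\sum_{i=1}^{s}e\bigg(\frac{(x_1^*,\ldots,x_{i-1}^*){\cal N}:x_i^*}{(x_1^*,\ldots,x_{i-1}^*){\cal N}};\,k_0-m_i,\,\mathrm{\bf k}-\mathrm{\bf h}_i\bigg).$$

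Next I would identify the middle term. Iterating the isomorphism for ${\cal N}/a^*{\cal N}$ of Remark \ref{re4.6}(i) along the sequence ${\bf x}$, the module ${\cal N}/{\bf x}^*{\cal N}$ agrees in all large degrees with the multigraded module $\bigoplus_{n\ge 0,\;\mathrm{\bf n}\ge\mathrm{\bf 0}} J^n\mathbb{I}^{\mathrm{\bf n}}(N/{\bf x}N)/J^{n+1}\mathbb{I}^{\mathrm{\bf n}}(N/{\bf x}N)$ associated to $N/{\bf x}N$; in particular its (eventually constant) Hilbert polynomial equals $e(J^{[1]},\mathrm{\bf I}^{[\mathrm{\bf 0}]};N/{\bf x}N)$. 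Because ${\bf x}$ is a mixed multiplicity system, $\dim\tfrac{N/{\bf x}N}{0:I^\infty}=\dim\tfrac{N}{{\bf x}N:I^\infty}\le 1$, so Remark \ref{re4.6}(iii) applies and gives $e(J^{[1]},\mathrm{\bf I}^{[\mathrm{\bf 0}]};N/{\bf x}N)=e\big(J;\tfrac{N/{\bf x}N}{0:I^\infty}\big)=e\big(J;\tfrac{N}{{\bf x}N:I^\infty}\big)$. Hence the middle term equals $e\big(J;\tfrac{N}{{\bf x}N:I^\infty}\big)$.

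For the $i$th summand, put $N'=N/(x_1,\ldots,x_{i-1})N$ and let ${\cal N}'$ be the multigraded module associated to $N'$. Iterating Remark \ref{re4.6}(i) once more shows that ${\cal N}/(x_1^*,\ldots,x_{i-1}^*){\cal N}$ agrees with ${\cal N}'$ in all large degrees, compatibly with the action of $x_i^*$; consequently $\frac{(x_1^*,\ldots,x_{i-1}^*){\cal N}:x_i^*}{(x_1^*,\ldots,x_{i-1}^*){\cal N}}$ agrees in large degrees with $0_{{\cal N}'}:x_i^*$ and therefore has the same mixed multiplicities. Since $x_i$ is a Rees superficial element of $N'$ with respect to $J,\mathrm{\bf I}$, the ``$0_{\cal N}:a^*$'' clause of Remark \ref{re4.6}(i) gives $e\big(0_{{\cal N}'}:x_i^*;\,k_0-m_i,\mathrm{\bf k}-\mathrm{\bf h}_i\big)=e\big(J^{[k_0-m_i+1]},\mathrm{\bf I}^{[\mathrm{\bf k}-\mathrm{\bf h}_i]};\,0_{N'}:x_i\big)$, and $0_{N'}:x_i=\frac{(x_1,\ldots,x_{i-1})N:x_i}{(x_1,\ldots,x_{i-1})N}=N_i$. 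Thus the $i$th summand equals $e\big(J^{[k_0-m_i+1]},\mathrm{\bf I}^{[\mathrm{\bf k}-\mathrm{\bf h}_i]};N_i\big)$, the relevant mixed multiplicities being defined exactly as in the proof of Corollary \ref{co2.10}. Substituting the two translations into the displayed identity and recalling $e({\cal N};k_0,\mathrm{\bf k})=e(J^{[k_0+1]},\mathrm{\bf I}^{[\mathrm{\bf k}]};N)$ yields the asserted formula.

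I expect the main obstacle to be bookkeeping rather than anything conceptual: one must carefully carry the ``agrees in all large degrees'' identifications through the inductive passage from $N$ to $N/(x_1,\ldots,x_{i-1})N$, check that forming quotients and colon modules commutes with these degreewise identifications, and use at each stage that $x_i$ is a Rees superficial element of the current quotient --- which is precisely what it means for ${\bf x}$ to be a Rees superficial sequence of $N$ with respect to $J,\mathrm{\bf I}$. Once this is set up, each individual equality is a direct application of a clause of Remark \ref{re4.6} or of Corollary \ref{co2.10}.
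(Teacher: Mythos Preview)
Your proposal is correct and follows essentially the same route as the paper: pass to the multigraded module ${\cal N}$ via Remark~\ref{re4.6}(iv), apply Corollary~\ref{co2.10} to ${\bf x}^*$, and translate the length term and each summand back to the ideal side using the identifications of Remark~\ref{re4.6}(i) (iterated to obtain ${\cal N}/(x_1^*,\ldots,x_i^*){\cal N}\cong$ the associated graded of $N/(x_1,\ldots,x_i)N$ in large degrees) together with Remark~\ref{re4.6}(iii). The paper records the iterated identification as a single displayed isomorphism (its equation~(\ref{eq(1)}), citing \cite[(4.1)]{VT3}) rather than arguing it step by step, but the content is the same.
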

\begin{proof}
 Since ${\bf x}$ is a  mixed  multiplicity system of $N$ with respect to ideals
 $J,\mathrm{\bf I}$ of the type $(k_0,\mathrm{\bf k}),$ it follows
 that $\dim\frac{N}{{\bf x}N: I^{\infty}}\le 1,$ and
  ${\bf x}^*$ is
 a  mixed  multiplicity system  of the type
 $(k_0, \mathrm{\bf k})$  of $\cal N$ by Remark
  \ref{re4.6}(iv). Recall that $e({\cal N}; k_0,\mathrm{\bf k}) = e(J^{[k_0+1]},
  \mathrm{\bf I}^{[\mathrm{\bf k}]};
 N).$ It can be verified (see  \cite [(4.1)] {VT3}) that
\begin{equation} \label{eq(1)} \big[{\cal N}/(x^*_1,\ldots,x^*_{i}){\cal
N}\big]_{(m, \;\mathrm{\bf m})}\cong \bigg[\bigoplus_{n \ge 0,\;
\mathrm{\bf n}\ge {\bf 0}}\dfrac{J^n\mathbb{I}^{\mathrm{\bf
n}}(N/{(x_1,\ldots,x_{i})}N)}{J^{n+1}\mathbb{I}^{\mathrm{\bf
n}}(N/{(x_1,\ldots,x_{i})}N)}\bigg]_{(m, \;\mathrm{\bf
m})}\end{equation} for
  all $m \gg 0;\; \mathrm{\bf m}\gg {\bf 0}$ and $
1 \le i \le s.$ Hence   it is easily seen by Remark \ref{re4.6}(i)
that
$$e({\cal N}/{\bf x}^*{\cal N}; 0, {\bf 0}) =
e(J^{[1]},\mathrm{\bf I}^{[\mathrm{\bf 0}]}; N/{\bf x}N )$$ and
$$e\bigg(\frac{(x^*_1,\ldots,x^*_{i-1}){\cal N}:
x^*_i}{(x^*_1,\ldots,x^*_{i-1}){\cal N}};
k_0-m_i,\mathrm{\bf k}-\mathrm{\bf h}_i\bigg)\\
= e\bigg(J^{[k_0-m_i+1]},\mathrm{\bf I}^{[\mathrm{\bf
k}-\mathrm{\bf h}_i]}; \frac{(x_1,\ldots,x_{i-1}){N}:
x_i}{(x_1,\ldots,x_{i-1}){N}}\bigg)$$ for each $ 1 \le i \le s.$
Moreover, $e(J^{[1]},\mathrm{\bf I}^{[\mathrm{\bf 0}]}; N/{\bf x}N
)= e(J ;\frac{N}{{\bf x}N: I^{\infty}})$ by Remark
  \ref{re4.6}(iii).
Consequently, by Corollary \ref{co2.10}  we get
  $$e(J^{[k_0+1]},\mathrm{\bf I}^{[\mathrm{\bf k}]}; N) = e\big(J;
\dfrac{N}{{\bf x}N:I^\infty}\big) -
\sum_{i=1}^se\big(J^{[k_0-m_i+1]},\mathrm{\bf I}^{[\mathrm{\bf
k}-\mathrm{\bf h}_i]}; N_i\big),$$  which finishes the proof.
\end{proof}

 One expressed original mixed multiplicities
    of
ideals in terms of the Hilbert-Samuel multiplicity by using
different sequences. First, in the case of $\mathfrak{n}$-primary
ideals, Risler-Teissier \cite{Te} in 1973 showed that each
original mixed multiplicity is the multiplicity of an ideal
generated by a superficial sequence and Rees \cite{Re} in 1984
proved that original mixed multiplicities are multiplicities of
ideals generated by joint reductions. For the case of arbitrary
ideals, Viet \cite{Vi} in 2000 characterized original mixed
multiplicities as the Hilbert-Samuel multiplicity via
(FC)-sequences. \vskip 0.4cm
 \begin{definition}[\cite{Vi}]\label{de4.9}  Let $\mathrm{\bf I}= I_1,\ldots,I_d$
 be ideals of $R.$ Set ${\frak I} = I_1\cdots I_d.$ An element $a \in R$ is called
 a {\it weak-$(FC)$- element of $N$ with respect to $\mathrm{\bf I}$}  if
 there exists $i \in \{ 1, \ldots, d\}$ such that $a \in I_i$ and
the following conditions are satisfied:
 \begin{itemize}
      \item[$\mathrm{(i)}$] $a$ is an $\frak I$-filter-regular element with respect
      to $N,$ i.e.,\;$0_N:a \subseteq 0_N: {\frak I}^{\infty}.$
    \item[$\mathrm{(ii)}$] $a$ is a Rees superficial element of $N$ with respect
    to $\mathrm{\bf I}.$
\end{itemize}
\noindent A sequence $x_1, \ldots, x_t$  in $R$ is called a {\it
weak-$(FC)$-sequence of $N$ with respect to $\mathrm{\bf I}$} if
$x_{i + 1}$ is a weak-(FC)-element of ${N}/{(x_1, \ldots,
x_{i})N}$ with respect to $\mathrm{\bf I}$  for all $0 \le i \le t
- 1$. A weak-(FC)-sequence of $N$ consisting of $k_1$ elements of
$I_1,\ldots,k_d$ elements of $I_d$ is called a weak-(FC)-sequence
of $N$  of the {\it type} $\mathrm{\bf k}= (k_1 ,\ldots,k_d).$
\end{definition}
Remember that \cite{Vi} defined weak-(FC)-sequences in the
condition $\frak I \nsubseteq \sqrt{\mathrm{Ann}_R{N}}$ (see e.g.
\cite {CP1, DV, MV, Vi1, Vi2, Vi4, VDT, VT, VT1}). In  Definition
\ref{de4.9}, we  omitted  this condition. Moreover, the authors of
\cite{DMT} proved that the superficial sequences in \cite{Te, Tr2,
TV, Vi6} are weak-(FC)-sequences (see \cite[Remark 3.8]{DMT}).

  \vskip 0.2cm
\begin{remark}\label{re4.13} \rm  Note that for any ${\bf k} \in \mathbb{N}^{d},$
there exists a weak-(FC)-sequence of $N$ with respect to $
\mathrm{\bf I}$ of the type ${\bf k}$  by \cite [Proposition
2.3]{MV}. And if ${\bf x}$ is a  weak-(FC)-sequence of $N$ with
respect to $J, \mathrm{\bf I},$ then ${\bf x}^*$ is a
$T_{++}$-filter-regular sequence  with respect to  $\cal N$ by
\cite[Proposition 4.5]{VT1}. Hence  by Remark \ref{re2.2a}, it
follows that if $e(J^{[k_0+1]},\mathrm{\bf I}^{[\mathrm{\bf k}]};
N )$ is defined, then any weak-(FC)-sequence ${\bf x}$  of $N$
with respect to $J, \mathrm{\bf I}$ of the type $(k_0,\mathrm{\bf
k})$ is a mixed multiplicity system of $N,$ i.e., $\dim
\dfrac{N}{{\bf x}N: I^\infty}\le 1.$
 \end{remark}
From Proposition \ref{pro2.000} and  Remark \ref{re4.6}, and
Remark \ref{re4.13}, we have the following.
 \vskip 0.2cm
\begin{corollary} \label{pro2.000c}
The following are equivalent:
\begin{itemize}
\item[$\mathrm{(i)}$] There exists a weak-$(FC)$-sequence ${\bf
x}$ of $N$ with respect to $J, \mathrm{\bf I}$ of the type
$(k_0,\mathrm{\bf k})$  such that ${\bf x}$ is a mixed
multiplicity system of $N.$
 \item[$\mathrm{(ii)}$] There exists  a mixed  multiplicity system
   of $N$ of the type $(k_0,\mathrm{\bf k}).$
 \item[$\mathrm{(iii)}$] The  mixed
multiplicity of $N$ of the type $(k_0,\mathrm{\bf k})$ is defined.
  \end{itemize}
\end{corollary}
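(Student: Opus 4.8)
The plan is to move the whole statement over to the $\mathbb{N}^{d+1}$-graded $T$-module ${\cal N}$, where $e(J^{[k_0+1]},\mathrm{\bf I}^{[\mathrm{\bf k}]};N)=e({\cal N};k_0,\mathrm{\bf k})$ holds by definition, and then invoke Proposition \ref{pro2.000} (applied with $d$ replaced by $d+1$, $S=T$, $M={\cal N}$, $A=R/J$), translating the hypotheses back and forth via the dictionary supplied by Remark \ref{re4.6} and Remark \ref{re4.13}. The implication (i)$\Rightarrow$(ii) is immediate: the sequence ${\bf x}$ produced by (i) is by hypothesis itself a mixed multiplicity system of $N$ of the type $(k_0,\mathrm{\bf k})$.

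For (ii)$\Rightarrow$(iii): let ${\bf x}$ be a mixed multiplicity system of $N$ with respect to $J,\mathrm{\bf I}$ of the type $(k_0,\mathrm{\bf k})$. By Definition \ref{de4.2} this means ${\bf x}$ is a Rees superficial sequence of $N$ with respect to $J,\mathrm{\bf I}$ of type $(k_0,\mathrm{\bf k})$ satisfying $\dim\frac{N}{{\bf x}N:I^\infty}\le 1$; hence by Remark \ref{re4.6}(iv) its image ${\bf x}^*$ in $\bigcup_{i=0}^d T_i$ is a mixed multiplicity system of ${\cal N}$ of the type $(k_0,\mathrm{\bf k})$. Proposition \ref{pro2.000}, implication (ii)$\Rightarrow$(iv), now gives that $e({\cal N};k_0,\mathrm{\bf k})$ is defined, i.e., $e(J^{[k_0+1]},\mathrm{\bf I}^{[\mathrm{\bf k}]};N)$ is defined.

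For (iii)$\Rightarrow$(i): assume $e(J^{[k_0+1]},\mathrm{\bf I}^{[\mathrm{\bf k}]};N)$ is defined. By Remark \ref{re4.13} (using \cite[Proposition 2.3]{MV}) there exists a weak-(FC)-sequence ${\bf x}$ of $N$ with respect to $J,\mathrm{\bf I}$ of the type $(k_0,\mathrm{\bf k})$, and the same Remark \ref{re4.13} asserts that, under the standing hypothesis that the mixed multiplicity is defined, every such ${\bf x}$ satisfies $\dim\frac{N}{{\bf x}N:I^\infty}\le 1$, i.e., is a mixed multiplicity system of $N$. This is exactly the sequence demanded in (i), and the cycle is closed.

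The only point that needs care --- and it is bookkeeping rather than a genuine obstacle --- is to keep the two meanings of ``mixed multiplicity system'' apart: the ring-theoretic one of Definition \ref{de4.2}, which already bundles in the Rees superficial condition, versus the graded one of Definition \ref{de2.2} applied to ${\cal N}$. The bridge between them is precisely Remark \ref{re4.6}(iv) together with the identity $e({\cal N};k_0,\mathrm{\bf k})=e(J^{[k_0+1]},\mathrm{\bf I}^{[\mathrm{\bf k}]};N)$; once this translation is set up, Proposition \ref{pro2.000} does all the work on the graded side, while Remark \ref{re4.13} delivers both the existence of a weak-(FC)-sequence of the prescribed mixed type $(k_0,\mathrm{\bf k})$ and the fact that, once the mixed multiplicity is defined, such a sequence is automatically a mixed multiplicity system of $N$.
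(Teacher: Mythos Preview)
Your proof is correct and follows essentially the same route as the paper: the cycle (i)$\Rightarrow$(ii) trivial, (ii)$\Rightarrow$(iii) via Remark \ref{re4.6}(iv) and Proposition \ref{pro2.000}, and (iii)$\Rightarrow$(i) via Remark \ref{re4.13}, is exactly what the paper does. Your added paragraph distinguishing the two notions of ``mixed multiplicity system'' is helpful commentary but not a departure from the paper's argument.
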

\begin{proof} (i)$\Rightarrow$(ii) is clear. (ii)$\Rightarrow$(iii): Since ${\bf
x}$ is a mixed  multiplicity system
   of $N$ of the type $(k_0,\mathrm{\bf k}),$ ${\bf
x}^*$ is a mixed  multiplicity system
   of $\cal N$ of the type $(k_0,\mathrm{\bf k})$ by Remark
   \ref{re4.6}(iv). Hence $e({\cal N}; k_0,\mathrm{\bf k})$ is
   defined by Proposition \ref{pro2.000}. So $e(J^{[k_0+1]},\mathrm{\bf I}^{[\mathrm{\bf k}]};
N )$ is defined since $e({\cal N}; k_0,\mathrm{\bf k})=
e(J^{[k_0+1]},\mathrm{\bf I}^{[\mathrm{\bf k}]}; N )$.
(iii)$\Rightarrow$(i) is evident by Remark
 \ref{re4.13}.
\end{proof}

Next, as an immediate application of Corollary \ref{co2.100} and
Remark \ref{re4.6}, and  Remark \ref{re4.13}, we obtain:
 \vskip 0.2cm
\begin{corollary}\label{co2.100c}
Let $e(J^{[k_0+1]},\mathrm{\bf I}^{[\mathrm{\bf k}]}; N )$ be the
  mixed multiplicity  of the type $(k_0,\mathrm{\bf
k}).$
 Let ${\bf x}$ be  a  weak-$(FC)$-sequence of $N$ with respect to $J, \mathrm{\bf I}$
 of the type $(h_0,\mathrm{\bf h})$ with $(h_0,\mathrm{\bf h}) \le (k_0,\mathrm{\bf
k}).$
  Then $e(J^{[k_0-h_0+1]},\mathrm{\bf I}^{[\mathrm{\bf k}-\mathrm{\bf h}]}; N/{\bf x}N)$
is defined and $$e(J^{[k_0+1]},\mathrm{\bf I}^{[\mathrm{\bf k}]};
N )= e(J^{[k_0-h_0+1]},\mathrm{\bf I}^{[\mathrm{\bf k}-\mathrm{\bf
h}]}; N/{\bf x}N).$$
  \end{corollary}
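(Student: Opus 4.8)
The plan is to move the statement over to the associated multi-graded module $\mathcal{N}$, apply Corollary \ref{co2.100} there, and then translate back via Remark \ref{re4.6}. First I would recall that by definition $e(J^{[k_0+1]},\mathbf{I}^{[\mathbf{k}]};N)=e(\mathcal{N};k_0,\mathbf{k})$, so the hypothesis says precisely that the mixed multiplicity of maximal degree of $\mathcal{N}$ of the type $(k_0,\mathbf{k})$ is defined. Since $\mathbf{x}$ is a weak-$(FC)$-sequence of $N$ with respect to $J,\mathbf{I}$ of the type $(h_0,\mathbf{h})$, Remark \ref{re4.13} shows that its image $\mathbf{x}^*$ in $\bigcup_{i=0}^{d}T_i$ is a $T_{++}$-filter-regular sequence with respect to $\mathcal{N}$, still of the type $(h_0,\mathbf{h})$.

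Because $(h_0,\mathbf{h})\le(k_0,\mathbf{k})$, I would then apply Corollary \ref{co2.100} with $S=T$, $M=\mathcal{N}$ and $\mathbf{y}=\mathbf{x}^*$; an $S_{++}$-filter-regular sequence of the full type $(k_0,\mathbf{k})$ exists by Remark \ref{re2.2}(i), so the hypotheses of that corollary are met. This gives that $e\big(\mathcal{N}/\mathbf{x}^*\mathcal{N};k_0-h_0,\mathbf{k}-\mathbf{h}\big)$ is defined and that
$$e(\mathcal{N};k_0,\mathbf{k})=e\big(\mathcal{N}/\mathbf{x}^*\mathcal{N};k_0-h_0,\mathbf{k}-\mathbf{h}\big).$$

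The last step, which is the only one requiring more than bookkeeping, is to identify $e\big(\mathcal{N}/\mathbf{x}^*\mathcal{N};k_0-h_0,\mathbf{k}-\mathbf{h}\big)$ with $e\big(J^{[k_0-h_0+1]},\mathbf{I}^{[\mathbf{k}-\mathbf{h}]};N/\mathbf{x}N\big)$. For this I would use the iterated isomorphism of Remark \ref{re4.6}(i) — the isomorphism \eqref{eq(1)} used in the proof of Theorem \ref{th4.11} — which says that in all large multi-degrees $\mathcal{N}/\mathbf{x}^*\mathcal{N}$ agrees with the multi-graded module built over the corresponding algebra from $N/\mathbf{x}N$; here one uses that a weak-$(FC)$-sequence is in particular a Rees superficial sequence, cf.\ Definition \ref{de4.9}. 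Since the mixed multiplicity depends only on the Hilbert polynomial, hence only on the values of the Hilbert function in large degrees, these two modules have the same Hilbert polynomial, so $e\big(J^{[k_0-h_0+1]},\mathbf{I}^{[\mathbf{k}-\mathbf{h}]};N/\mathbf{x}N\big)$ is defined and equals $e\big(\mathcal{N}/\mathbf{x}^*\mathcal{N};k_0-h_0,\mathbf{k}-\mathbf{h}\big)$. Chaining this with the equality from the previous paragraph yields $e(J^{[k_0+1]},\mathbf{I}^{[\mathbf{k}]};N)=e\big(J^{[k_0-h_0+1]},\mathbf{I}^{[\mathbf{k}-\mathbf{h}]};N/\mathbf{x}N\big)$, as claimed. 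I expect the only real care to be keeping the index shift $k_0\leftrightarrow k_0+1$ straight between the two conventions; everything else is routine.
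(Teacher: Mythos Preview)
Your proposal is correct and follows essentially the same route as the paper: pass to $\mathcal{N}$ via Remark~\ref{re4.13} to get that $\mathbf{x}^*$ is $T_{++}$-filter-regular, apply Corollary~\ref{co2.100}, and then use Remark~\ref{re4.6}(i) together with the isomorphism~\eqref{eq(1)} to translate back to $N/\mathbf{x}N$. The extra care you take in invoking Remark~\ref{re2.2}(i) for the full-type filter-regular sequence and in noting that weak-$(FC)$ implies Rees superficial is fine bookkeeping that the paper leaves implicit.
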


\begin{proof} Since ${\bf x}$ is a  weak-(FC)-sequence of $N$ with respect
to $J, \mathrm{\bf I}$ of the type $(h_0,\mathrm{\bf h}),$ it
follows that ${\bf x}^*$ is a  $T_{++}$-filter-regular sequence
with respect to $\cal N$ of the type $(h_0,\mathrm{\bf h})$ by
Remark
 \ref{re4.13}. Hence $e({\cal N}/{\bf x}^*{\cal N}; k_0-h_0,\mathrm{\bf k}-\mathrm{\bf
h})$ is defined and $$e({\cal N}; k_0,\mathrm{\bf k}) = e({\cal
N}/{\bf x}^*{\cal N}; k_0-h_0,\mathrm{\bf k}-\mathrm{\bf h})$$ by
Corollary \ref{co2.100}. So $e(J^{[k_0-h_0+1]},\mathrm{\bf
I}^{[\mathrm{\bf k}-\mathrm{\bf h}]}; N/{\bf x}N)$ is defined.
Moreover,
$$e({\cal N}/{\bf x}^*{\cal N}; k_0-h_0,\mathrm{\bf k}-\mathrm{\bf
h}) = e(J^{[k_0-h_0+1]},\mathrm{\bf I}^{[\mathrm{\bf
k}-\mathrm{\bf h}]}; N/{\bf x}N)$$ by Remark \ref{re4.6}(i) and
(\ref{eq(1)}). Thus, $e(J^{[k_0+1]},\mathrm{\bf I}^{[\mathrm{\bf
k}]}; N )= e(J^{[k_0-h_0+1]},\mathrm{\bf I}^{[\mathrm{\bf
k}-\mathrm{\bf h}]}; N/{\bf x}N).$
\end{proof}

By  Remark \ref{re4.6}; Remark \ref{re4.13} and Corollary
\ref{co2.100c}, we get the following result which can be
considered as a corollary of Theorem \ref{th4.11}.

   \vskip 0.2cm
\begin{corollary}\label{co4.10}
Let $e(J^{[k_0+1]},\mathrm{\bf I}^{[\mathrm{\bf k}]}; N )$ be the
  mixed multiplicity  of the type $(k_0,\mathrm{\bf
k}).$
 Let ${\bf x}$ be  a  weak-$(FC)$-sequence of $N$ with respect to $J, \mathrm{\bf I}$
 of the type $(k_0,\mathrm{\bf k}).$
  Then
  $$e(J^{[k_0+1]},\mathrm{\bf I}^{[\mathrm{\bf k}]}; N )= e(J;\frac{N}{{\bf x}N:I^{\infty}}).$$
And $e(J^{[k_0+1]},\mathrm{\bf I}^{[\mathrm{\bf k}]}; N )\ne 0 $
if and only if $\dim\frac{N}{{\bf x}N: I^{\infty}}= 1.$

\end{corollary}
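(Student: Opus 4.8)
The plan is to peel off the entire sequence ${\bf x}$ in one step, reducing to the empty-sequence case, and then read off the answer from Remark \ref{re4.6}(iii). The structural input needed at the outset is this: since $e(J^{[k_0+1]},\mathrm{\bf I}^{[\mathrm{\bf k}]}; N )$ is assumed defined and ${\bf x}$ is a weak-$(FC)$-sequence of $N$ with respect to $J,\mathrm{\bf I}$ of the full type $(k_0,\mathrm{\bf k})$, Remark \ref{re4.13} guarantees that ${\bf x}$ is a mixed multiplicity system of $N$, so $\dim\frac{N}{{\bf x}N:I^\infty}\le 1$. Because $0_{N/{\bf x}N}:I^\infty=({\bf x}N:I^\infty)/{\bf x}N$, this is the same as $\dim\frac{N/{\bf x}N}{0_{N/{\bf x}N}:I^\infty}\le 1$, which is precisely the hypothesis under which Remark \ref{re4.6}(iii) may be applied to the module $N/{\bf x}N$.

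Next I would invoke Corollary \ref{co2.100c} with $(h_0,\mathrm{\bf h})=(k_0,\mathrm{\bf k})$ (trivially $(h_0,\mathrm{\bf h})\le(k_0,\mathrm{\bf k})$): it says that $e(J^{[1]},\mathrm{\bf I}^{[\mathrm{\bf 0}]};N/{\bf x}N)$ is defined and
$$e(J^{[k_0+1]},\mathrm{\bf I}^{[\mathrm{\bf k}]}; N )=e(J^{[1]},\mathrm{\bf I}^{[\mathrm{\bf 0}]};N/{\bf x}N).$$
Then Remark \ref{re4.6}(iii), applied to $N/{\bf x}N$ in place of $N$ (legitimate by the dimension bound from the previous paragraph), rewrites the right-hand side as $e\big(J;\frac{N/{\bf x}N}{0_{N/{\bf x}N}:I^\infty}\big)$, which equals $e\big(J;\frac{N}{{\bf x}N:I^\infty}\big)$ by the elementary isomorphism noted above. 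Chaining the two identities gives $e(J^{[k_0+1]},\mathrm{\bf I}^{[\mathrm{\bf k}]}; N )=e\big(J;\frac{N}{{\bf x}N:I^\infty}\big)$, which is the first assertion.

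For the non-vanishing statement I would use that $\dim\frac{N}{{\bf x}N:I^\infty}\le 1$ is already established and that the saturation identity $({\bf x}N:I^\infty):I={\bf x}N:I^\infty$ rules out $\dim\frac{N}{{\bf x}N:I^\infty}=0$; hence either $\dim\frac{N}{{\bf x}N:I^\infty}=1$ or $\frac{N}{{\bf x}N:I^\infty}=0$. For the $\frak n$-primary ideal $J$ the Hilbert--Samuel multiplicity $e(J;P)$ is positive when $\dim P=1$ and vanishes when $P=0$, so $e\big(J;\frac{N}{{\bf x}N:I^\infty}\big)\ne 0$ exactly when $\dim\frac{N}{{\bf x}N:I^\infty}=1$. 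A graded alternative: by Corollary \ref{co2.100c} applied to $\cal N$ one has $e(J^{[k_0+1]},\mathrm{\bf I}^{[\mathrm{\bf k}]}; N )=e({\cal N}/{\bf x}^*{\cal N};0,{\bf 0})$, and Theorem \ref{th2.11} with the empty $S_{++}$-filter-regular sequence, together with Remark \ref{de4.0} and the isomorphisms used in the proof of Theorem \ref{th4.11}, identify its non-vanishing with $\dim\mathrm{Supp}_{++}({\cal N}/{\bf x}^*{\cal N})=\dim\frac{N}{{\bf x}N:I^\infty}-1=0$. The argument is mostly bookkeeping; the only delicate point is verifying the hypotheses of Corollary \ref{co2.100c} and Remark \ref{re4.6}(iii) — that ${\bf x}$, taken with its full type $(k_0,\mathrm{\bf k})$, is a mixed multiplicity system of $N$ — which is exactly Remark \ref{re4.13} and relies on $e(J^{[k_0+1]},\mathrm{\bf I}^{[\mathrm{\bf k}]}; N )$ being defined, together with keeping the $\mathbb{N}^{d+1}$-graded convention $e({\cal N};\cdot)$ aligned with the Hilbert--Samuel convention $e(J;\cdot)$ when reading off the non-vanishing.
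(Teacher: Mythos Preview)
Your proof is correct and follows essentially the same route as the paper: invoke Remark \ref{re4.13} to see that ${\bf x}$ is a mixed multiplicity system (hence $\dim\frac{N}{{\bf x}N:I^\infty}\le 1$), apply Corollary \ref{co2.100c} with $(h_0,\mathrm{\bf h})=(k_0,\mathrm{\bf k})$ to reduce to $e(J^{[1]},\mathrm{\bf I}^{[\mathrm{\bf 0}]};N/{\bf x}N)$, and then use Remark \ref{re4.6}(iii) to identify this with $e(J;\frac{N}{{\bf x}N:I^\infty})$; the non-vanishing is handled via Remark \ref{re4.6}(ii) and (iii) exactly as you do. Your added explicitness about the isomorphism $0_{N/{\bf x}N}:I^\infty=({\bf x}N:I^\infty)/{\bf x}N$ and the graded alternative are fine but not needed.
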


\begin{proof} Since ${\bf x}$ is a  weak-(FC)-sequence of $N$ with respect
to $J, \mathrm{\bf I}$ of the type $(k_0,\mathrm{\bf k}),$ it
follows that $e(J^{[k_0+1]},\mathrm{\bf I}^{[\mathrm{\bf k}]}; N
)= e(J^{[1]},\mathrm{\bf I}^{[\mathrm{\bf 0}]}; N/{\bf x}N)$ by
Corollary \ref{co2.100c}.
  Note that
 $\dim\frac{N}{{\bf x}N: I^{\infty}}\le 1$ by Remark \ref{re4.13}.  Therefore,
$e(J^{[1]},\mathrm{\bf I}^{[\mathrm{\bf 0}]}; N/{\bf x}N )=
e(J;\frac{N}{{\bf x}N:I^{\infty}})$ by Remark \ref{re4.6}(iii).
 So $e(J^{[k_0+1]},\mathrm{\bf I}^{[\mathrm{\bf k}]}; N )= e(J;\frac{N}{{\bf x}N:I^{\infty}}).$
 Hence $e(J^{[k_0+1]},\mathrm{\bf
I}^{[\mathrm{\bf k}]}; N )\ne 0 $ if and only if
$e(J;\frac{N}{{\bf x}N:I^{\infty}}) \ne 0$.
 This is equivalent to
$\dim\frac{N}{{\bf x}N: I^{\infty}}= 1$ by Remark \ref{re4.6}(ii)
and Remark \ref{re4.6}(iii).
 \end{proof}

Note that one can prove Corollary \ref{co4.10} by using    Theorem
\ref{th4.11} as follows:

Set $N_{i}= \frac{(x_1,\ldots,x_{i-1}){N}:
x_i}{(x_1,\ldots,x_{i-1}){N}}$ for each $ 1 \le i \le s.$   Since
${\bf x}$ is  a weak-(FC)-sequence of $N$ with respect to $J,
\mathrm{\bf I},$ ${\bf x}$ is an $ I$-filter-regular sequence with
respect to $N.$ Consequently,  $N_{i}/(0_{N_{i}}: I^{\infty}) = 0$
since $(x_1,\ldots,x_{i-1}){N}: x_i \subseteq
(x_1,\ldots,x_{i-1}){N}:I^{\infty}.$ So $\dim N_{i}/(0_{N_{i}}:
I^{\infty}) < 0.$ Hence
$\sum_{i=1}^se\big(J^{[k_0-m_i+1]},\mathrm{\bf I}^{[\mathrm{\bf
k}-\mathrm{\bf h}_i]}; N_i\big) = 0.$ Then we obtain
$e(J^{[k_0+1]},\mathrm{\bf I}^{[\mathrm{\bf k}]}; N )=
e(J;\frac{N}{{\bf x}N:I^{\infty}})$ by Theorem \ref{th4.11}. We
get the proof of Corollary \ref{co4.10}.

Recall that in the case that $k_0 +\mid \mathrm{\bf k}\mid = \dim
\dfrac{N}{0_N: I^\infty}-1$, \cite[Theorem 3.4]{Vi}  in 2000
showed (see e.g. \cite{DMT, DV, MV,Vi1, Vi2,VDT, VT}) that
$e(J^{[k_0+1]},\mathrm{\bf I}^{[\mathrm{\bf k}]}; N) \ne 0$ {\it
if and only if there exists a weak-$(FC)$-sequence of  $N$ of the
type $(0, \mathrm{\bf k})$ with respect to $J, \mathrm{\bf I}$
such that $\dim N/{\bf x}N:I^\infty = \dim N/0_N:I^\infty - \mid
\mathrm{\bf k} \mid.$ In this case, we obtain}
$e(J^{[k_0+1]},\mathrm{\bf I}^{[\mathrm{\bf k}]}; N)= e\big(J;
\dfrac{N}{{\bf x}N:I^\infty}\big).$ Hence Corollary \ref{co4.10}
is a more natural result than \cite[Theorem 3.4]{Vi} in the
original mixed multiplicity theory. Note that \cite{DMT}  proved
that
 \cite[Theorem 3.4]{Vi} covers the results of Risler and Teissier \cite{Te} in 1973;
  Trung \cite[Theorem 3.4]{Tr2} in 2001; Trung and Verma \cite[Theorem 1.4]{TV} in 2007
  (see \cite[Remark 3.8]{DMT}).

 As an immediate  consequence of Theorem \ref{th4.11} and
Corollary \ref{co4.10}, we get the following interesting
corollary.

\vskip 0.2cm
\begin{corollary} \label{pro2.000a}  Let ${\bf x}$ be
a  mixed  multiplicity system of $N$ with respect to ideals
$J,\mathrm{\bf I}$ of the type $(k_0,\mathrm{\bf k}).$ Then we
have $$e(J^{[k_0+1]},\mathrm{\bf I}^{[\mathrm{\bf k}]}; N) \le
e\big(J; \dfrac{N}{{\bf x}N:I^\infty}\big),$$ and equality holds
if ${\bf x}$ is a weak-$(FC)$-sequence of $N$ with respect to $J,
\mathrm{\bf I}.$
\end{corollary}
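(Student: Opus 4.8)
The plan is to deduce the inequality from Theorem \ref{th4.11} by showing that every summand on the right-hand side of that formula is a non-negative integer, and then to recover the equality case from Corollary \ref{co4.10}. First I would invoke Theorem \ref{th4.11}: since $\mathbf{x} = x_1,\ldots,x_s$ is a mixed multiplicity system of $N$ with respect to $J,\mathbf{I}$ of the type $(k_0,\mathbf{k})$, writing $N_i = \frac{(x_1,\ldots,x_{i-1})N:x_i}{(x_1,\ldots,x_{i-1})N}$ and letting $(m_i,\mathbf{h}_i)$ denote the type of $x_1,\ldots,x_i$, we have
$$e(J^{[k_0+1]},\mathbf{I}^{[\mathbf{k}]};N) = e\big(J;\tfrac{N}{\mathbf{x}N:I^\infty}\big) - \sum_{i=1}^s e\big(J^{[k_0-m_i+1]},\mathbf{I}^{[\mathbf{k}-\mathbf{h}_i]};N_i\big).$$
So the inequality $e(J^{[k_0+1]},\mathbf{I}^{[\mathbf{k}]};N) \le e\big(J;\tfrac{N}{\mathbf{x}N:I^\infty}\big)$ follows once I check that each $e\big(J^{[k_0-m_i+1]},\mathbf{I}^{[\mathbf{k}-\mathbf{h}_i]};N_i\big)$ is defined and $\geq 0$.

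The non-negativity is the point that needs care. Each such mixed multiplicity of ideals is, by definition, $e(\mathcal{N}_i; k_0-m_i, \mathbf{k}-\mathbf{h}_i)$ for the associated multigraded module $\mathcal{N}_i$ built from $N_i$ and the ideals $J,\mathbf{I}$; this is a mixed multiplicity of maximal degree of a standard $\mathbb{N}^{d+1}$-graded module over an Artinian local ring, so by Proposition \ref{thm2.1}(i) it is a non-negative integer — provided it is defined. That it is defined follows from the fact that $\mathbf{x}$ being a mixed multiplicity system of $N$ forces each tail $x_{i+1},\ldots,x_s$ to be a mixed multiplicity system of $N_i$ of the type $(k_0-m_i,\mathbf{k}-\mathbf{h}_i)$: this is exactly the translation, via Remark \ref{re4.6}(iv), of the corresponding fact for the module $\mathcal{N}_i$, which in turn is the content of Remark \ref{re2.2a} (applied to subsequences of a mixed multiplicity system). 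Hence Proposition \ref{pro2.000c} (or directly Proposition \ref{pro2.000}) guarantees $e(J^{[k_0-m_i+1]},\mathbf{I}^{[\mathbf{k}-\mathbf{h}_i]};N_i)$ is defined, and Proposition \ref{thm2.1}(i) makes it $\geq 0$. Summing, the subtracted term is $\geq 0$, giving the inequality.

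For the equality case, assume $\mathbf{x}$ is moreover a weak-$(FC)$-sequence of $N$ with respect to $J,\mathbf{I}$. Here I would follow the remark made in the excerpt just after Corollary \ref{co4.10}: being a weak-$(FC)$-sequence makes $\mathbf{x}$ an $I$-filter-regular sequence with respect to $N$, so $(x_1,\ldots,x_{i-1})N:x_i \subseteq (x_1,\ldots,x_{i-1})N:I^\infty$, whence $N_i/(0_{N_i}:I^\infty) = 0$ and so $\dim N_i/(0_{N_i}:I^\infty) < 0$. By Remark \ref{de4.0} this means $\dim \mathrm{Supp}_{++}\mathcal{N}_i < 0$, i.e. $\mathcal{N}_i$ has zero Hilbert polynomial, so $e(J^{[k_0-m_i+1]},\mathbf{I}^{[\mathbf{k}-\mathbf{h}_i]};N_i) = 0$ for every $i$. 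The sum in Theorem \ref{th4.11} therefore vanishes and $e(J^{[k_0+1]},\mathbf{I}^{[\mathbf{k}]};N) = e\big(J;\tfrac{N}{\mathbf{x}N:I^\infty}\big)$. (Alternatively, one could cite Corollary \ref{co4.10} directly, since a weak-$(FC)$-sequence of the type $(k_0,\mathbf{k})$ is in particular one realizing the equality there.) The only mild obstacle is the bookkeeping in the first paragraph — making sure that "$\mathbf{x}$ is a mixed multiplicity system of $N$" genuinely propagates to each $N_i$ with the correct type — but this is handled cleanly by passing through $\mathcal{N}$ via Remark \ref{re4.6}(iv) and then applying the already-established Remark \ref{re2.2a}.
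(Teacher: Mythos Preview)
Your proposal is correct and follows exactly the route the paper intends: the paper states this corollary as an ``immediate consequence of Theorem \ref{th4.11} and Corollary \ref{co4.10}'' with no further proof, and you have simply spelled out that deduction, using Proposition \ref{thm2.1}(i) for the non-negativity of the summands and either Corollary \ref{co4.10} or the $I$-filter-regular argument for the equality case. The only remark is that the definedness of each $e(J^{[k_0-m_i+1]},\mathbf{I}^{[\mathbf{k}-\mathbf{h}_i]};N_i)$ is already implicit in the statement of Theorem \ref{th4.11} (via its proof through Corollary \ref{co2.10}), so your bookkeeping paragraph, while correct, is not strictly needed.
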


 By combining Corollary
 \ref{co4.10} and Corollary \ref{pro2.000a} with  Remark
 \ref{re4.6} and Remark
 \ref{re4.13},
we  have the following.
 \vskip 0.2cm
\begin{corollary}\label{co3.10d} Let $e(J^{[k_0+1]},\mathrm{\bf I}^{[\mathrm{\bf k}]}; N )$
be the
  mixed multiplicity  of the type $(k_0,\mathrm{\bf
k}).$ Then the following are equivalent:
\begin{itemize}
\item[$\mathrm{(i)}$] $e(J^{[k_0+1]},\mathrm{\bf I}^{[\mathrm{\bf
k}]}; N )>0.$
 \item[$\mathrm{(ii)}$]  $\dim\frac{N}{{\bf x}N: I^{\infty}}= 1$ for any
  mixed  multiplicity system
 ${\bf x}$ of $N$
 of the type $(k_0,\mathrm{\bf k}).$
    \item[$\mathrm{(iii)}$] $\dim\frac{N}{{\bf x}N: I^{\infty}}= 1$
for any weak-$(FC)$-sequence ${\bf x}$  of $N$
 of the type $(k_0,\mathrm{\bf k}).$
 \item[$\mathrm{(iv)}$] There exists  a weak-$(FC)$-sequence ${\bf x}$  of $N$
 of the type $(k_0,\mathrm{\bf k})$ such that $$\dim\frac{N}{{\bf x}N: I^{\infty}}=
 1.$$
  \end{itemize}
\end{corollary}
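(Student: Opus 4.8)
The plan is to run the cycle of implications (i)$\Rightarrow$(ii)$\Rightarrow$(iii)$\Rightarrow$(iv)$\Rightarrow$(i), using the comparison inequality of Corollary \ref{pro2.000a}, the multiplicity formula of Corollary \ref{co4.10}, the existence and translation statements for weak-$(FC)$-sequences in Remark \ref{re4.13}, and the elementary facts of Remark \ref{re4.6}. Throughout, the key bookkeeping observation is a dictionary between ``$e\big(J;\frac{N}{{\bf x}N:I^{\infty}}\big)>0$'' and ``$\dim\frac{N}{{\bf x}N:I^{\infty}}=1$'': if ${\bf x}$ is a mixed multiplicity system of $N$ of type $(k_0,{\bf k})$, then $\dim\frac{N}{{\bf x}N:I^{\infty}}\le 1$ by definition, and this dimension cannot be $0$ because $\big(0_{N/{\bf x}N}:I^{\infty}\big):I=0_{N/{\bf x}N}:I^{\infty}$, so Remark \ref{re4.6}(ii) applies to $N/{\bf x}N$; hence $\dim\frac{N}{{\bf x}N:I^{\infty}}\in\{-\infty,1\}$, and, identifying $e\big(J;\frac{N}{{\bf x}N:I^{\infty}}\big)$ with the non-negative integer $e(J^{[1]},{\bf I}^{[{\bf 0}]};N/{\bf x}N)$ via Remark \ref{re4.6}(iii), this number is nonzero precisely when $\dim\frac{N}{{\bf x}N:I^{\infty}}=1$ (this is exactly the equivalence already used at the end of the proof of Corollary \ref{co4.10}).

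Granting this, the four steps are short. For (i)$\Rightarrow$(ii): given any mixed multiplicity system ${\bf x}$ of $N$ of type $(k_0,{\bf k})$, Corollary \ref{pro2.000a} gives $0<e(J^{[k_0+1]},{\bf I}^{[{\bf k}]};N)\le e\big(J;\frac{N}{{\bf x}N:I^{\infty}}\big)$, so the right-hand side is positive and the dictionary forces $\dim\frac{N}{{\bf x}N:I^{\infty}}=1$. For (ii)$\Rightarrow$(iii): since $e(J^{[k_0+1]},{\bf I}^{[{\bf k}]};N)$ is assumed defined, Remark \ref{re4.13} shows that every weak-$(FC)$-sequence ${\bf x}$ of $N$ with respect to $J,{\bf I}$ of type $(k_0,{\bf k})$ is in particular a mixed multiplicity system of $N$, so (ii) applies to it. For (iii)$\Rightarrow$(iv): Remark \ref{re4.13} supplies a weak-$(FC)$-sequence of $N$ of type $(k_0,{\bf k})$, and (iii) pins down its dimension. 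For (iv)$\Rightarrow$(i): if ${\bf x}$ is a weak-$(FC)$-sequence of type $(k_0,{\bf k})$ with $\dim\frac{N}{{\bf x}N:I^{\infty}}=1$, then Corollary \ref{co4.10} gives $e(J^{[k_0+1]},{\bf I}^{[{\bf k}]};N)=e\big(J;\frac{N}{{\bf x}N:I^{\infty}}\big)\ne 0$, hence $>0$ since it is a non-negative integer.

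I do not anticipate any serious obstacle here: each implication is a one-line appeal to a result established earlier in the paper. The only point that requires a little care is the bookkeeping in the first paragraph, namely combining the Rees-superficiality built into the definition of a mixed multiplicity system of $N$ (which yields $\dim\frac{N}{{\bf x}N:I^{\infty}}\le 1$) with Remark \ref{re4.6}(ii) (which excludes dimension $0$), so as to upgrade positivity of the $\frak n$-primary multiplicity $e(J;-)$ to the equality $\dim\frac{N}{{\bf x}N:I^{\infty}}=1$. An alternative, slightly longer route would transport the whole statement to the standard $\mathbb{N}^{d+1}$-graded module ${\cal N}$ via the correspondences ${\bf x}\leftrightarrow{\bf x}^{*}$, $e(J^{[k_0+1]},{\bf I}^{[{\bf k}]};N)=e({\cal N};k_0,{\bf k})$, and $\dim\frac{N}{{\bf x}N:I^{\infty}}=\dim\mathrm{Supp}_{++}\big({\cal N}/{\bf x}^{*}{\cal N}\big)+1$ (the last from the isomorphism (\ref{eq(1)}) and Remark \ref{de4.0}), and then invoke Corollary \ref{co2.10d} for ${\cal N}$; but the direct cycle avoids re-translating condition (ii), which ranges only over the Rees-superficial mixed multiplicity systems of $N$ and not over all mixed multiplicity systems of ${\cal N}$.
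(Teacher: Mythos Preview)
Your proof is correct and follows essentially the same approach as the paper: the same cycle (i)$\Rightarrow$(ii)$\Rightarrow$(iii)$\Rightarrow$(iv)$\Rightarrow$(i), invoking Corollary~\ref{pro2.000a} for the first step, Remark~\ref{re4.13} for the middle two, and Corollary~\ref{co4.10} for the last. Your ``dictionary'' paragraph makes explicit the combination of Remark~\ref{re4.6}(ii) and~(iii) that the paper uses tersely in (i)$\Rightarrow$(ii), and the alternative route via Corollary~\ref{co2.10d} that you mention is likewise noted by the authors.
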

\begin{proof} (i) $\Rightarrow$(ii): By Corollary \ref{pro2.000a},
we have $e\big(J; \dfrac{N}{{\bf x}N:I^\infty}\big)>0.$ Hence
since $\dim\frac{N}{{\bf x}N: I^{\infty}} \le
 1,$ it follows that  $\dim\frac{N}{{\bf x}N: I^{\infty}}=
 1$ by Remark
\ref{re4.6}(ii). (ii)$\Rightarrow$ (iii) is clear by Remark
 \ref{re4.13}. (iii)$\Rightarrow$ (iv) is evident by Remark
 \ref{re4.13}. (iv)$\Rightarrow$(i) is immediate by Corollary
 \ref{co4.10}. The corollary is proved.  Note that the proof of this corollary
 can be based on
    Corollary \ref{co2.10d}; Remark
 \ref{re4.6} and Remark \ref{re4.13}.
\end{proof}

Suppose that  ${\bf x}$ is  a  mixed  multiplicity system of $N$
with respect to
 $J,\mathrm{\bf I}$ of the type $(k_0, {\bf k}).$  Then ${\bf x}^*$ is a  mixed  multiplicity
 system of $\cal N$
 of the type $(k_0, {\bf k})$ by  Remark
  \ref{re4.6}(iv). Hence we obtain
 $\widetilde{e}({\bf x}^*, {\cal N}) = \chi({\bf x}^*, {\cal N})= e({\cal N};
 k_0,\mathrm{\bf k})$
  by Theorem \ref{th2.00}. Moreover, we have  $e({\cal N}; k_0,\mathrm{\bf
  k})= e(J^{[k_0+1]},\mathrm{\bf I}^{[\mathrm{\bf k}]}; N )$.
  So we get a version of \cite[Theorem 4.9]{VT3}
   for these
mixed multiplicities.

 \vskip 0.2cm
\begin{theorem}\label{th4.8} Let ${\bf x}$ be  a  mixed  multiplicity system of $N$
 with respect to ideals $J,\mathrm{\bf I}$ of the type $(k_0,\mathrm{\bf k})$
 and let ${\bf x}^*$ be the image of ${\bf x}$ in $\bigcup_{i = 0}^dT_i.$ Then
  $$e(J^{[k_0+1]},\mathrm{\bf I}^{[\mathrm{\bf k}]}; N )=
   \chi({\bf x}^*, {\cal N})= \widetilde{e}({\bf x}^*, {\cal N}).$$
\end{theorem}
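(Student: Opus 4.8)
The plan is to reduce this statement about ideals to the already established module-theoretic version, Theorem~\ref{th2.00}, by passing to the multigraded algebra $T$ and module $\mathcal N$ attached to the ideals $J,\mathbf I$. The crucial translation device is Remark~\ref{re4.6}(iv): since $\mathbf x$ is a mixed multiplicity system of $N$ with respect to $J,\mathbf I$ of the type $(k_0,\mathbf k)$, its image $\mathbf x^*$ in $\bigcup_{i=0}^{d}T_i$ is a mixed multiplicity system of $\mathcal N$ of the type $(k_0,\mathbf k)$. This is precisely the hypothesis required to invoke Theorem~\ref{th2.00} for the module $\mathcal N$ over the standard $\mathbb N^{d+1}$-graded algebra $T$.

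First I would record that, by the very definition of the mixed multiplicity of $N$ with respect to ideals, one has $e(J^{[k_0+1]},\mathbf I^{[\mathbf k]};N)=e(\mathcal N;k_0,\mathbf k)$, so the left-hand side of the asserted identity is literally the mixed multiplicity of $\mathcal N$ of the type $(k_0,\mathbf k)$. Next, applying Theorem~\ref{th2.00} to the module $\mathcal N$ together with the mixed multiplicity system $\mathbf x^*$, I obtain that $e(\mathcal N;k_0,\mathbf k)$ is defined and that $\chi(\mathbf x^*,\mathcal N)=\widetilde e(\mathbf x^*,\mathcal N)=e(\mathcal N;k_0,\mathbf k)$. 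Chaining the two equalities gives $e(J^{[k_0+1]},\mathbf I^{[\mathbf k]};N)=\chi(\mathbf x^*,\mathcal N)=\widetilde e(\mathbf x^*,\mathcal N)$, which is the claim.

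Since every ingredient is a citation to a result proved earlier in the paper, there is essentially no computational obstacle here; the only thing that needs care is verifying that the hypotheses of Theorem~\ref{th2.00} are genuinely in force, namely that $T$ is a finitely generated standard $\mathbb N^{d+1}$-graded algebra over the Artinian local ring $R/J$ and $\mathcal N$ a finitely generated standard graded $T$-module (both already observed when $T$ and $\mathcal N$ were introduced), and that $\mathbf x^*$, viewed as a sequence in $\bigcup_{i=0}^{d}T_i$, really has type $(k_0,\mathbf k)$. Once the correspondence $\mathbf x\leftrightarrow\mathbf x^*$ supplied by Remark~\ref{re4.6}(iv) is in hand, the argument collapses to a two-line deduction.
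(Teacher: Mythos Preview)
Your proposal is correct and follows essentially the same argument as the paper: use Remark~\ref{re4.6}(iv) to see that $\mathbf x^*$ is a mixed multiplicity system of $\mathcal N$ of the type $(k_0,\mathbf k)$, apply Theorem~\ref{th2.00} to obtain $\chi(\mathbf x^*,\mathcal N)=\widetilde e(\mathbf x^*,\mathcal N)=e(\mathcal N;k_0,\mathbf k)$, and identify the latter with $e(J^{[k_0+1]},\mathbf I^{[\mathbf k]};N)$ by definition.
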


 Finally, we would like to give some following comments.

\begin{remark}\label{re4.13d} From  the results of \cite {VT3} and this paper, we
find that the presence of the
 mixed multiplicity of $M$ of
 the type $\mathbf{k}$ with $|\mathrm{\bf k}|\; < \mathrm{Supp}_{++}M$
also arises from the process of transforming  original mixed
multiplicities (see e.g. \cite [Corollary 3.11; Corollary 4.10;
Corollary 4.11]{VT3} and Corollary \ref{co2.100}; Corollary
\ref{co2.10}; Theorem \ref{th4.11}; Corollary \ref{co2.100c}).
Moreover, in this broader class, many hypotheses for results in
the original mixed multiplicity theory have been removed.
  It seems that many results of the paper not only cover,
but  also are  more natural than results in the original mixed
multiplicity theory. This contributes to the explanation   the
meaning of mixed multiplicities of maximal degrees.

\end{remark}


\begin{thebibliography}{99}{\small
\bibitem{BS} M. Brodmann,  R. Y. Sharp, {\it Local cohomology: an algebraic  introduction with  geometric applications}, Cambridge studies in Advanced Mathematics, {\bf 60}, Cambridge, Cambridge University Press, 1998.
 Cambridge University Press, 1993.
\bibitem{CP} R. Callejas-Bedregal,  V. H. Jorge P�erez, {\it  Mixed multiplicities and the minimal number of generator of modules}, J. Pure Appl.
Algebra 214 (2010), 1642-1653.
\bibitem{CP1} R. Callejas-Bedregal,  V. H. Jorge P�erez, (FC){\it-Sequences,
mixed multiplicities and reductions of modules}, arXiv:1109.5058
(2011).

 \bibitem{DMT} L. V. Dinh, N. T. Manh, T. T. H. Thanh, {\it On
 some superficial sequences,}
Southeast Asian Bull. Math. 38 (2014), 803-811.
\bibitem{DV} L. V. Dinh,  D. Q. Viet, {\it On two results of mixed multiplicities}, Int. J. Algebra 4(1) 2010, 19-23.
\bibitem{HHRT} M. Herrmann, E.  Hyry,  J.
Ribbe, Z. Tang,  {\it Reduction numbers and multiplicities of
multigraded structures},  J. Algebra 197 (1997), 311-341.
\bibitem{KV} D. Katz,  J. K. Verma, {\it Extended Rees algebras and mixed multiplicities},
Math. Z. 202 (1989), 111-128.
\bibitem{KR1} D. Kirby,  D. Rees, {\it Multiplicities in graded rings I: the general theory},
Contemporary Mathematics 159 (1994), 209-267.
\bibitem{KR2} D. Kirby,  D. Rees, {\it Multiplicities in graded rings II: integral equivalence
and the Buchsbaum-Rim multiplicity}, Math. Proc. Cambridge Phil.
Soc. 119 (1996),  425-445.
 \bibitem{KT} S. Kleiman,  A. Thorup, {\it Mixed Buchsbaum-Rim multiplicities},
 Amer. J. Math. 118 (1996), 529-569.
\bibitem{MV} N. T. Manh,  D. Q. Viet, {\it Mixed  multiplicities of modules over Noetherian
local rings}, Tokyo J. Math. 29 (2006), 325-345.
\bibitem{Re} D. Rees, {\it Generalizations of reductions and mixed multiplicities}, J. London.
Math. Soc. 29 (1984), 397-414.
\bibitem{Ro} P. Roberts, {\it Local Chern classes, multiplicities and perfect complexes},
 Memoire Soc. Math. France. 38 (1989), 145-161.
\bibitem{SV} J. Stuckrad,   W. Vogel, {\it Buchsbaum  rings and applications}, VEB Deutscher Verlag der Wisssenschaften. Berlin, 1986.
\bibitem{Sw} I. Swanson, {\it Mixed multiplicities, joint reductions and quasi-unmixed
local rings }, J. London Math. Soc. 48 (1993), no. 1, 1-14.
\bibitem{Te} B. Teisier, {\it Cycles \'evanescents, sections planes,
et conditions de Whitney}, Singularities \`a  Carg\`ese, 1972.
Ast\`erisque, 7-8 (1973), 285-362.
\bibitem{Tr1} N. V. Trung, {\it Reduction exponents and degree bound for the defining equation
of graded rings}, Proc. Amer. Mat. Soc. 101 (1987), 229-234.
\bibitem{Tr2} N. V. Trung, {\it Positivity of mixed multiplicities}, J. Math. Ann. 319 (2001),
33-63.
\bibitem{TV} N. V. Trung,   J. Verma, {\it   Mixed  multiplicities of ideals versus mixed
volumes of polytopes}, Trans. Amer. Math. Soc. 359 (2007),
4711-4727.
\bibitem{Ve} J. K. Verma, {\it Multigraded  Rees algebras and mixed multiplicities}, J. Pure
and  Appl.  Algebra 77 (1992), 219-228.
\bibitem{Vi} D. Q. Viet, {\it Mixed multiplicities of arbitrary ideals in local rings},
Comm. Algebra. 28(8) (2000), 3803-3821.
\bibitem{Vi1} D. Q. Viet, {\it On some properties of $(FC)$-sequences of ideals in local rings},
Proc. Amer. Math. Soc. 131 (2003), 45-53.
\bibitem{Vi2} D. Q. Viet, {\it Sequences determining mixed multiplicities and reductions
of ideals}, Comm. Algebra. 31 (2003), 5047-5069.
\bibitem{Vi4} D. Q. Viet, {\it Reductions and mixed multiplicities of ideals},
Comm. Algebra. 32 (2004), 4159-4178.
\bibitem{Vi5} D. Q. Viet, {\it The multiplicity and the Cohen-Macaulayness of extended Rees
algebras of equimultiple ideals}, J. Pure and Appl. Algebra 205
(2006), 498-509.
\bibitem{DQV} D. Q. Viet, {\it On the Cohen-Macaulayness of fiber cones},  Proc. Amer. Math. Soc. 136 (2008),  4185-4195.
\bibitem{VD1} D. Q. Viet, L. V. Dinh, {\it On the multiplicity of Rees algebras of good filtrations},
 Kyushu J. Math. 66 (2012), 261-272.

\bibitem{DV} D. Q. Viet, L. V. Dinh,
{\it On mixed multiplicities of good filtrations}, Algebra Colloq.
22, 421 (2015) 421-436.


\bibitem{VDT} D. Q. Viet, L. V. Dinh, T. T. H. Thanh,
{\it A note on joint reductions and mixed multiplicities}, Proc.
Amer. Math. Soc. 142 (2014), 1861-1873.
 \bibitem{Vi6} D. Q. Viet,  N. T. Manh,  {\it Mixed multiplicities of multigraded modules},
Forum Math. 25 (2013), 337-361.
\bibitem{VT3} D. Q. Viet,  T. T. H. Thanh, {\it Multiplicity and Cohen-Macaulayness of fiber cones
of good filtrations}, Kyushu J. Math. 65(2011), 1-13.
\bibitem{VT} D. Q. Viet,  T. T. H. Thanh, {\it On $(FC)$-sequences and mixed multiplicities
of multigraded algebras}, Tokyo J. Math. 34 (2011), 185-202.
\bibitem{VT1} D. Q. Viet,  T. T. H. Thanh, {\it On  some  multiplicity and mixed multiplicity
formulas}, Forum Math. 26 (2014), 413-442.
\bibitem{VT2} D. Q. Viet,  T. T. H. Thanh, {\it A  note on formulas transmuting  mixed
multiplicities}, Forum Math. 26 (2014), 1837-1851.
\bibitem{VT3} D. Q. Viet,  T. T. H. Thanh, {\it The
Euler-Poincare characteristic and  mixed multiplicities}, Kyushu
J. Math. 69 (2015),  393-411.
\bibitem{VT4} D. Q. Viet,   T. T. H. Thanh,
{\it On the filter-regular sequences of multi-graded modules},
Tokyo J. Math. 38 (2015), 439-457.}

\end{thebibliography}
\end{document}